\newtheorem{theo}{Theorem}[section]
\newtheorem*{theo*}{Theorem}
\newtheorem{defi}{Definition}[section]
\newtheorem*{defi*}{Definition}
\newtheorem*{notation*}{Notation}
\newtheorem{lem}{Lemma}[section]
\newtheorem*{lem*}{Lemma}
\newtheorem*{obs*}{Observation}
\newtheorem{cor}{Corollary}[section]
\newtheorem*{cor*}{Corollary}
\newtheorem{prop}{Proposition}[section]
\newtheorem*{prop*}{Proposition}
\newtheorem*{conj*}{Conjecture}
\theoremstyle{definition}
\newtheorem{ex}{Example}[section]
\newtheorem*{ex*}{Example}
\newtheorem{rmk}{Remark}[section]
\newtheorem*{rmk*}{Remark}
\renewcommand{\[}{\begin{equation*}}
\renewcommand{\]}{\end{equation*}}
\def\C{\mathbb{C}}
\def\Cinf{\mathcal{C}^\infty}
\DeclareMathOperator\tr{tr}
\def \ad {\mbox{ad}}
\DeclareMathOperator{\im}{i}
\DeclareMathOperator{\imm}{Im}
\DeclareMathOperator\id{id}
\def \A {\mathcal A}
\def \R {\mathbb R}
\def \S {\mathbb S}
\def \C {\mathbb C}
\def \Z {\mathbb Z}
\def \SU {\mathrm{SU}}
\def \k {\mathfrak k}
\def \g {\mathfrak g}
\def \s {\mathfrak s}
\def \Na {\nabla}
\newcommand{\p}{\partial}
\renewcommand{\bar}{\overline}
\title{Bismut Hermitian Einstein metrics and the stability of the pluriclosed flow}
\author{Giuseppe Barbaro}
\address[Giuseppe Barbaro]{Università La Sapienza, Roma}
\email{g.barbaro@uniroma1.it}
\address{Dipartimento di Matematica ``Guido Castelnuovo", Università la Sapienza, Piazzale Aldo Moro, 5, 00185 Roma, Italy} 
\keywords{}
\thanks{The author is supported by GNSAGA of INdAM and by project PRIN2017 ``Real and Complex Manifolds: Topology, Geometry and holomorphic dynamics'' (code 2017JZ2SW5)}
\begin{document}
	
\begin{abstract}
    We compute the $(1,1)$-Aeppli cohomology of compact Bismut flat manifolds with finite fundamental group. We thus deduce that the Bismut flat metrics on these manifolds are globally stable for the pluriclosed flow. In turn, this also prevents the existence of non-flat homogeneous Bismut Hermitian Einstein (hence pluriclosed Calabi--Yau with torsion) metrics on a large class of homogeneous manifolds. 
\end{abstract}
	
\maketitle
	
\section{Introduction}
    {\em Hermitian geometry} lies at the intersection of {\em complex} and {\em Riemannian} geometry.
    Indeed, a Hermitian manifold $(M,J,g)$ is endowed with a complex structure $J$ and a compatible metric $g$, i.\,e. such that $g$ is $J$-invariant and $\omega:=g(J\cdot,\cdot)$ is a non-degenerate $(1,1)$-form.
    When the fundamental 2-form $\omega$ is closed (hence symplectic) the manifold is called K\"ahler.
    There are several reasons to study Hermitian non-K\"ahler geometry; we distinguish two of them which are central to our dissertation:
    to investigate non-K\"ahler Calabi--Yau geometries arising from theoretical physics, and completing the classification of complex surfaces.

    Several natural connections have been introduced to deal with the non-K{\"a}hler case.
    As a matter of fact, on a Hermitian manifold $(M,J,g)$ the Levi--Civita connection $\Na^{LC}$ preserves the complex structure, i.\,e. $\Na^{LC}J=0$, if and only if the metric is K{\"a}hler.
    Henceforth, one considers other connections, possibly with torsion, that preserve the Hermitian structure.
    Precisely, these connections $\Na$ satisfy $\Na J=\Na g=0$ and are called {\em Hermitian} connections.
    For what concerns this article, we focus on the {\em Bismut connection} $\Na^B$ (also known as {\em Strominger} or {\em Strominger--Bismut} connection), since it appears in the context of classification of complex surfaces through the {\em pluriclosed flow} \cite{MR2673720, MR3110582, MR4181011}, and it is used in describing models of heterotic string theory \cite{MR851702, MR800347, GATES1984157, MR872720} like {\em Calabi--Yau with torsion} geometries.

    The Bismut connection first appeared in a work of Bismut \cite{MR1006380} in the context of {\em non-K\"ahler index theory}, and it is defined on a Hermitian manifold $(M,J,g)$ with respect to the Levi--Civita connection as
    \begin{equation*}
        g\left(\nabla^{B}_{x}y,z\right)=g\left(\nabla^{LC}_{x}y,z\right)+\frac{1}{2}d\omega\left(Jx,Jy,Jz\right),
    \end{equation*}
    for $x,y,z$ vector fields on $M$.
    The crucial peculiarity of the Bismut connection is that it has a skew-symmetric torsion chosen to adapt to the Hermitian context.
    It is indeed the only Hermitian connection with skew-symmetric torsion, which is given by 
    \begin{equation*}%\label{eq: int torsion}
		g\left(T^B(\cdot,\cdot),\cdot\right) = d\omega(J\cdot,J\cdot,J\cdot) = -Jd\omega(\cdot,\cdot,\cdot).
    \end{equation*}
    When the torsion of the Bismut connection is closed, that is $dJd\omega=0$, the metric is said {\em pluriclosed} (or {\em strong K\"ahler with torsion}, SKT in short).\\    

    In~\cite{MR2673720} the authors introduced an evolution equation for Hermitian metrics that preserves the pluriclosed condition called {\em pluriclosed flow}.
    It evolves a starting pluriclosed metric $\omega_0$ as
    \begin{equation*}
        \begin{cases}
            \frac{\p }{\p t}\omega (t) = - \left(Ric^B(\omega(t))\right)^{1,1},\\
            \omega(0)= \omega_0, %, & \p\bar\p\omega_0=0.
        \end{cases}
    \end{equation*}
    where $Ric^B(g)$ is given by tracing the Bismut curvature tensor $R^B(g)$ in the endomorphism components, $\left(\cdot\right)^{1,1}$ denotes the $(1,1)$-component of the form, and the flow preserves the pluriclosed condition since $\p\bar\p\left(Ric^B(\omega(t))\right)^{1,1}=0 $ by \eqref{eq: ricci bismut e chern}.
    The pluriclosed flow is particularly well-suited for compact complex surfaces since there is a pluriclosed representative in any conformal class of Hermitian metrics \cite[Théorème 1]{MR470920}.
    As a matter of fact, it has been introduced as an analytic tool to understand the topology and geometry of compact complex surfaces.
    In particular, it is conjectured~\cite[Section~5]{MR3110582} that on a minimal {\em class VII surface} with $b_2>0$ it should detect a rational curve, leading to a parabolic proof of the {\em Global Spherical Shell conjecture}~\cite{MR780359} for $b_2=1$.
    In~\cite{MR4181011} further analysis of the conjectural behavior of the pluriclosed flow on class VII surfaces has been made; see also \cite[Section 7.3]{MR4579184} and the references therein for a survey on its possible application to the classification of complex surfaces.

    Up to now, there is no definitive understanding of the behaviour of the pluriclosed flow and the singularities it may encounter.
    In fact, a complete description of the long-time evolution of this flow has been reached only in the locally homogeneous setting for complex surfaces \cite{MR3511471}, and for metrics of non-positive, flat, or negative holomorphic bisectional curvature \cite{MR3462132}.
    In \cite{MR3957836}, \cite{MR4288257} and \cite{fusi_vez} it has been developed an analysis of the behavior of the pluriclosed flow acting on invariant metrics on {almost-abelian Lie groups}, {Hopf manifolds}, and {Oeljeklaus--Toma manifolds} respectively. %, in the latter by adapting the techniques of the {\em gradient flow}.
    In this article we address the long-time existence and convergence of the pluriclosed flow on compact {\em Bismut flat manifolds}, which are complex manifolds equipped with a Hermitian metric whose Bismut curvature tensor vanishes, i.\,e. $R^B(g)\equiv 0$. We prove the following result.
    \begin{theo*}[Theorem \ref{th: global stability bis}]
    	Let $(M,J,\omega_{BF})$ be a compact Bismut flat manifold with finite fundamental group.
    	Then for any pluriclosed metric $\omega_0$ on $\left(M,J\right)$ the solution to the pluriclosed flow with initial data $\omega_0$ exists on $[0, \infty)$ and converges to a Bismut flat metric $\omega_{\infty}$.
    \end{theo*}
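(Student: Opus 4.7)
The plan is to verify the cohomological hypothesis \eqref{eq: cohomological condition} of Theorem \ref{thm: GJS convergence} for every pluriclosed metric $\omega_0$ on $(M,J)$, possibly after replacing the reference Bismut flat metric. The key observation is that when $\omega_0$ and $\omega_{BF}$ are both pluriclosed, $\omega_0-\omega_{BF}$ is $\partial\bar\partial$-closed and so determines a class in $H^{1,1}_A(M)$, and \eqref{eq: cohomological condition} amounts to saying that this class lies in the kernel of the natural map $\partial\colon H^{1,1}_A(M)\to H^{2,1}_{\bar\partial}(M)$. It therefore suffices to find, for each pluriclosed $\omega_0$, a Bismut flat metric $\tilde\omega_{BF}$ with $[\omega_0-\tilde\omega_{BF}]_A = 0$; then Theorem \ref{thm: GJS convergence} applied with $\tilde\omega_{BF}$ in place of $\omega_{BF}$ yields long-time existence and convergence to a Bismut flat limit.

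The first step is to pass to the universal cover. Since $\pi_1(M)$ is finite, the universal cover $\tilde M$ is again a compact simply-connected Bismut flat manifold, and by the Samelson--Wang classification it is isomorphic to a compact simply-connected (hence semisimple) Lie group $G$ equipped with a bi-invariant metric and a Samelson complex structure. A pluriclosed metric on $M$ lifts to a deck-invariant pluriclosed metric on $G$, and by uniqueness of the pluriclosed flow any Bismut flat limit on $G$ inherits the same equivariance and descends to $M$; thus it suffices to prove the theorem when $M=G$.

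The heart of the argument is the computation of $H^{1,1}_A(G)$. I would use the standard averaging trick with the bi-invariant Haar measure to show that every Aeppli class admits a bi-invariant representative, reducing $H^{1,1}_A(G)$ to an algebraic cohomology of the Chevalley--Eilenberg-type complex $\bigl(\Lambda^{\bullet,\bullet}\g^*,\partial,\bar\partial\bigr)$. Computing the image and kernel of $\partial$ and $\bar\partial$ on bi-invariant $(1,0)$- and $(0,1)$-forms in this algebraic model is the main technical obstacle, and is where the refinement of the argument of \cite{barbaro_pcflow} announced in the introduction does its work; the expected outcome is that $\dim H^{1,1}_A(G)$ equals the number of simple factors of $\g$, matching the examples already known for Hopf surfaces, Calabi--Eckmann threefolds and compact simple rank-two Lie groups.

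The final step is to match this cohomological dimension against the family of bi-invariant Bismut flat metrics. Such metrics are in bijection with Ad-invariant positive definite symmetric bilinear forms on $\g$, giving exactly one positive scaling parameter per simple factor, which in turn produces a family of Aeppli classes of the right dimension to span $H^{1,1}_A(G)$. A dimension count and an openness argument then show that any pluriclosed Aeppli class can be realized by a bi-invariant Bismut flat metric $\tilde\omega_{BF}$ obtained by rescaling on each simple factor. The main obstacle in the whole scheme is the algebraic computation of $H^{1,1}_A(G)$ together with the verification that Bismut flat metrics span it; once both are in hand, Theorem \ref{thm: GJS convergence} applied with $\tilde\omega_{BF}$ closes the proof, and the assumption of finite fundamental group is exactly what rules out abelian factors that would otherwise enlarge $H^{1,1}_A$ beyond the Bismut flat locus.
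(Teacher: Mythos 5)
Your overall architecture coincides with the paper's: pass to the universal cover, which by the Wang--Yang--Zheng classification is a compact semisimple Lie group $G$ with a Samelson complex structure; compute $H^{1,1}_A(G,J)$; show that the Aeppli class of every pluriclosed metric is realized by a Bismut flat metric; and invoke Theorem \ref{thm: GJS convergence}. There are, however, two genuine gaps. The first is that the announced outcome of the cohomology computation is wrong: you claim $\dim H^{1,1}_A(G,J)$ equals the number of simple factors of $\g$, matched by one Killing-form rescaling per simple factor. The correct count, which is the content of Theorem \ref{th: 1,1 A cohomology} and Corollary \ref{cor: 1,1 A cohomology reducible case}, is the number of \emph{irreducible components of the complex structure} $J$, which can be strictly smaller: for an irreducible Samelson structure on a non-simple group (e.g.\ a Calabi--Eckmann-type structure on $\SU(2)\times\SU(2)$) one has $H^{1,1}_A\cong\C$, not $\C^2$. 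Correspondingly, rescaling the Killing form independently on each simple factor in general destroys compatibility with $J$, since $J$ may mix the Cartan subalgebras of different simple factors; the $J$-compatible bi-invariant metrics form a family of dimension equal to the number of irreducible components of $J$ (Section \ref{subsec: bi-inv metrics}). With your counts the matching between Bismut flat classes and $H^{1,1}_A$ is set up against the wrong target space.

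The second gap is that the central computation is deferred, and the route you propose for it is insufficient as stated. Haar averaging produces a projection onto invariant forms commuting with $\p$ and $\bar\p$, but to conclude that the invariant (Chevalley--Eilenberg) double complex computes \emph{Aeppli} cohomology one needs the inclusion to be a quasi-isomorphism for Dolbeault and conjugate Dolbeault cohomology (a non-trivial theorem of Pittie for left-invariant complex structures on compact groups), after which \cite[Theorem C]{stelzig23} upgrades this to all cohomologies; averaging alone does not give this. The paper instead uses the Tanr\'e model of the Tits fibration $T\hookrightarrow G\to G/T$ together with Stelzig's theorem, and the actual determination of $H^{1,1}_A$ then hinges on identifying which squares of the model break into L-shaped zig-zags, i.e.\ on solving the matrix equation \eqref{eq: crucial}, where irreducibility of $J$ is what forces all the coefficients $b_i$ to agree. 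Finally, you omit the step showing that a pluriclosed metric has non-zero Aeppli class (the paper deduces this from Borel's theorem that $G$ admits no symplectic structure); without it, $[\omega_0]_A$ could a priori be zero or lie on the boundary of the cone spanned by the Bismut flat classes, and your dimension-count-plus-openness argument would not produce a genuine metric $\tilde\omega_{BF}$.
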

	The proof is based on a previous result obtained in \cite{Garcia-Fernandez:2021tq} (and stated here as Theorem \ref{thm: GJS convergence}) which ensures that given a pluriclosed metric  $\omega_0$ such that 
	\begin{equation}\tag{Tor-class}\label{eq: cohomological condition}
		[\p\omega_0] = [\p\omega_{BF} ] \in H^{2,1}_{\bar\p}(M,J),
	\end{equation}
	the solution of the pluriclosed flow with initial data $\omega_0$ exists on $[0, \infty)$ and converges to a Bismut flat metric $\omega_\infty$.

%    Finally, long-time existence and convergence of the pluriclosed flow have been achieved, under a natural cohomological ansatz, on {\em Bismut flat manifolds}  \cite{Garcia-Fernandez:2021tq}, which are complex manifolds equipped with a Hermitian metric whose Bismut curvature tensor vanishes, i.\,e. $R^B(g)\equiv 0$.
 %   More precisely, given a compact Bismut flat manifold $(M,J, \omega_{BF})$, for any pluriclosed metric  $\omega_0$ such that 
  %  \begin{equation}\tag{Tor-class}\label{eq: cohomological condition}
   %     [\p\omega_0] = [\p\omega_{BF} ] \in H^{2,1}_{\bar\p}(M,J),
    %\end{equation}
%    Theorem 1.2 of \cite{Garcia-Fernandez:2021tq} (stated here as Theorem \ref{thm: GJS convergence}) ensures that the solution of the pluriclosed flow with initial data $\omega_0$ exists on $[0, \infty)$ and converges to a Bismut flat metric $\omega_\infty$.

    It is evident that %Theorem \ref{thm: GJS convergence} reduces the problem of understanding the long-time behavior of the pluriclosed flow on Bismut flat manifolds to check whether 
    if \eqref{eq: cohomological condition} is satisfied by any pluriclosed metric, then the Bismut flat metrics are {\em globally-stable static points} of the pluriclosed flow.
    As a consequence, Theorem \ref{thm: GJS convergence} becomes powerful when combined with the knowledge of the cohomology of the manifold.
%    For example, with Theorem~\ref{thm: GJS convergence} at hand, the stability of the flat metrics on the torus for the pluriclosed flow (achieved in \cite[Theorem 1.1]{MR3462132}) can be obtained just by using the $\p\bar\p$-Lemma to check \eqref{eq: cohomological condition}.
    We verify that for compact Bismut flat manifold with finite fundamental group the knowledge of the dimension of the {\em $(1,1)$-Aeppli cohomology} group is enough to derive the global stability of the Bismut flat metrics.
    As a matter of fact, the cohomological condition of Theorem \ref{thm: GJS convergence} can be restated in terms of the Aeppli cohomology, which is better suited for studying pluriclosed metrics.
    Then, we reduce the problem of global stability to prove that the $(1,1)$-Aeppli cohomology is generated by the classes of Bismut flat metrics.
    This idea was also used in some specific cases such us the Hopf surface, the Calabi--Eckmann threefold, and the compact simple Lie groups of rank two, respectively in \cite[Examples 2.7 and 2.8]{Garcia-Fernandez:2021tq} and \cite{barbaro_pcflow}.
    The novelty of this article is to exploit the torus bundle structure of Bismut flat manifolds to apply Chern--Weyl theory in computing their Aeppli cohomology.
    Remarkably, our approach avoid the computation of the Dolbeault cohomology of the manifolds, that was instead the usual argument, see Remark \ref{rmk: crucial}.
%    More precisely, compact Bismut flat manifolds are the total spaces of principal toric bundles over {\em flag manifolds} through the {\em Tits fibration}. 
 %   Thus, by carefully choosing a model for the Dolbeault cohomology of the flag, the {\em Tarné construction} gives a model for the Dolbeault cohomology of the Bismut flat manifold which is compatible with the real structure. 
  %  Consequently, this is also a model for the Aeppli cohomology and we can verify that the dimension of the $(1, 1)$-Aeppli cohomology is precisely counting the dimension of the cone of the Bismut flat metrics on the manifold, proving our main result. 
    
    %The cohomological condition \eqref{eq: cohomological condition} 
    %is natural since along the pluriclosed flow, the torsion of the Bismut connection satisfies $\p\omega_t = \p\omega_0 + \bar\p\beta(t)$, where $\beta$ evolves as
    %\begin{equation*}\begin{cases}
    %		\frac{\p}{\p t}\beta = - \left(Ric^{B}(\omega_t)\right)^{2,0},\\
    %		\beta_{|_{t=0}} = 0.
    %\end{cases}\end{equation*}
    %Moreover, it is needed in order to compare the Bismut connection of the evolving metric with the Bimsut connection of the reference Bismut flat metric on the same {\em holomorphic Courant algebroid}, and to prove that the former converges to the latter. This is why..
    We also highlight that from the argument in \cite{Garcia-Fernandez:2021tq} one obtains that the flow is converging to a Bismut flat metric, but there is no evident relation between the endpoint of the flow $\omega_\infty$ and the reference metric $\omega_{BF}$.
    However, with Theorem \ref{th: global stability}, we show that, a posteriori, we can obtain a better understanding of the relation between $\omega_\infty$ and $\omega_{BF}$.    

    For compact Bismut flat manifolds in general, the problem of global stability remains open.
    In Example \ref{ex: higher rank} we explicitly construct a compact Bismut flat manifold with infinite fundamental group to highlight the main difficulties in extending our argument.
    As far as we know, this is also the first example of a compact complex manifold $(M,J)$ equipped with a family of Bismut flat metrics $\left\{\omega_u\right\}_{u\in\C}$ which are diffeomorphic one to each other but have different torsion classes, i.\,e. $[\p\omega_u]\neq[\p\omega_{u'}]$ in $H^{2,1}_{\bar\p}(M,J)$. \\
    
    Another essential step in comprehending the evolution of the pluriclosed flow involves gaining insights into the geometry of its stationary points.
    These are pluriclosed metrics $\omega$ which satisfy
    \begin{equation}\label{1,1 Bismut Einstein}\tag{BHE}
        \left(Ric^B(\omega)\right)^{1,1} = \lambda \omega \, ,\quad\lambda\in\R.
    \end{equation}
%    On the other hand, a {\em pluriclosed soliton} is a pair $(g,f)$ of a pluriclosed metric and a function satisfying 
 %   \begin{equation*}
  %      \left(Ric^B(g)\right)^{1,1} - \lambda g =  \mathcal{L}_{\nabla f} g \, ,\quad\lambda\in\R ,
   % \end{equation*}
    %for $\mathcal{L}$ the Lie derivative.%; it is called {\em steady, shrinking} or {\em expanding} depending on $\lambda=0$, $\lambda>0$ or $\lambda<0$.
    It turns out \cite[Theorem 1.4]{ye2023bismut}, that \eqref{1,1 Bismut Einstein} forces either the metric to be K\"ahler or the Bismut--Ricci form to vanish.
    In the latter case, these metrics are % both pluriclosed and CYT and 
    are called {\em Bismut Hermitian Einstein} \cite[Definition 2.4]{Garcia-Fernandez:2021tq}.
    Up to now, the only known Bismut Hermitian Einstein metrics are the Bismut flat metrics, the K{\"a}hler Einstein metrics, and their Hermitian products, motivating the interest in finding non-K\"ahler non-flat examples.
    As a consequence of our main Theorem \ref{th: global stability bis} we show that on a large class of homogeneous manifolds there are no non-trivial Bismut Hermitian Einstein metrics.
    More precisely, we study {\em C-spaces}, which were defined by Wang \cite{MR66011} as compact complex manifolds admitting a transitive action by a compact Lie group $G$ of biholomorphisms and finite fundamental group, and we obtain the following result generalizing Corollary 5.1 in \cite{Fino22}.
    \begin{theo*}[Theorem \ref{thm: non-existence on C-spaces}]
    	Let $(M,J)$ be a C-space and suppose that $g$ is a left-invariant Bismut Hermitian Einstein metric on it.
    	Then, up to finite cover, $M$ is a Lie group and $g$ is a bi-invariant metric.
    	In particular, $g$ is Bismut flat.
    \end{theo*}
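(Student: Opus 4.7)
The plan is to exploit the fact that a Bismut Hermitian Einstein metric is, by the definition recalled in the excerpt, a pluriclosed metric satisfying $(Ric^B(\omega))^{1,1}=0$, and hence a stationary solution of the pluriclosed flow. Once $(M,J)$ is placed in the framework of Theorem~\ref{th: global stability bis}, comparing this constant trajectory with the Bismut flat limit predicted by the theorem will force $g$ itself to be Bismut flat; the structural conclusion then follows from the classical description of compact Bismut flat manifolds.

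First I would verify the hypotheses of Theorem~\ref{th: global stability bis} for $(M,J)$. Finite fundamental group is built into the definition of a C-space. The existence of a compatible Bismut flat metric $\omega_{BF}$ on $(M,J)$, not a priori related to $g$, is furnished by Theorem~6.1 of \cite{MR4032184} together with Theorem~3.2 of \cite{Fino22}, which forms the content of Section~\ref{sec: non-flat BHE}. Thus $(M,J,\omega_{BF})$ is a compact Bismut flat manifold with finite fundamental group in the precise sense required by the stability theorem.

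Next, I apply Theorem~\ref{th: global stability bis} to the pluriclosed metric $\omega:=g(J\cdot,\cdot)$: the pluriclosed flow with initial datum $\omega$ exists on $[0,\infty)$ and converges to some Bismut flat metric $\omega_\infty$. On the other hand, because $\omega$ satisfies $(Ric^B(\omega))^{1,1}=0$, the constant path $t\mapsto\omega$ is a solution of the flow starting from $\omega$, and by uniqueness it is \emph{the} solution. Comparing the two descriptions gives $\omega_\infty=\omega$, so $g$ is already Bismut flat.

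To obtain the structural part of the conclusion, I would invoke the classical description of compact Bismut flat manifolds (of Cartan--Schouten--Wang type, which also underlies the notion of Samelson space used throughout the paper): up to a finite cover, $(M,J,g)$ is a compact Lie group equipped with a bi-invariant metric and a compatible invariant complex structure, which yields exactly the stated conclusion (and, as a bi-invariant metric on a compact Lie group, is automatically Bismut flat). The only delicate point in this plan is the first step, where the cited structural theorems are used to promote $(M,J)$ to a Bismut flat manifold; everything downstream is an immediate consequence of the static-point observation combined with Theorem~\ref{th: global stability bis}.
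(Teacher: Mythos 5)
Your overall strategy coincides with the paper's: a Bismut Hermitian Einstein metric is a pluriclosed static point of the flow, so once $(M,J)$ is known to be a Bismut flat manifold with finite fundamental group, Theorem \ref{th: global stability bis} (this is exactly Corollary \ref{cor: obstruction on Bf} in the paper) forces the static metric $g$ to be Bismut flat, and the Wang--Yang--Zheng classification together with Milnor's lemma gives the bi-invariance statement. That downstream part of your argument is correct and is how the paper concludes.

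The gap is in the step you yourself flag as delicate, and it is a genuine one. Theorem 6.1 of \cite{MR4032184} only says that a \emph{pluriclosed} C-space is, up to finite cover, a product $K\times F$ of a compact Lie group and a generalized flag manifold, while Theorem 3.2 of \cite{Fino22} applies only once you already know that $M$ is, up to finite cover, a compact semisimple Lie group. The two cited theorems together do \emph{not} exclude a nontrivial flag factor $F$, and if $F\neq\mathrm{pt}$ the manifold is not Bismut flat, so Theorem \ref{th: global stability bis} cannot be invoked at all. The paper bridges this with a first Chern class argument that your proposal omits: by \eqref{eq: ricci bismut e chern}, a Bismut Hermitian Einstein metric forces $c_1(M)$ to vanish in $(1,1)$-Aeppli (hence de Rham) cohomology, whereas generalized flag manifolds are K\"ahler--Einstein Fano and compact Lie groups with Samelson complex structures have vanishing first Chern class; hence the factor $F$ must be trivial. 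Only after this reduction does Theorem 3.2 of \cite{Fino22}, applied to the left-invariant pluriclosed metric $g$, show that $J$ is compatible with a bi-invariant metric, i.e.\ that $(M,J)$ genuinely carries a Bismut flat metric. Note also that the left-invariance hypothesis on $g$ is used precisely at this point (Theorem 3.2 is a statement about invariant metrics), not merely in the final structural conclusion.
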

	The idea behind the proof is to use the results in \cite{MR4032184} and \cite{Fino22} to show that the manifold is forced to admit also a Bismut flat metric.
	Then the existence of non-flat Bismut Hermitian Einstein metrics is in contrast with the global stability of the Bismut flat metrics for the pluriclosed flow.

	Finally, we recall that a Hermitian metric $g$ is said to be {\em Calabi--Yau with torsion} (CYT in short) if $Ric^B(g)=0$.
	Therefore, the Bismut Hermitian Einstein equation is given by the pairing of the pluriclosed and the CYT conditions.
	However, Calabi--Yau with torsion geometry is interesting on its own since it plays a role in Physics after the works of Strominger \cite{MR851702} and Hull \cite{MR862401}, and it attracted attention as models for string compactifications, see e.g. \cite{MR1954814, MR1989858, MR1959324, MR2030780, MR2095098, MR2096734}.
	
	Being a Hermitian connection, $\Na^B$ determines a representative of the first Chern class $c_1(M)$ in de Rham cohomology via its Ricci form.
	Therefore, the existence of Calabi--Yau with torsion structures is obstructed by having vanishing first Chern class.
	It turns out \cite[Theorem 3]{MR2764884} that every compact complex homogeneous space with vanishing first Chern class, after an appropriate deformation of the complex structure, admits a homogeneous CYT metric, provided that the complex homogeneous space also has an invariant volume form.
	Then, even if the CYT metrics are generally not unique, in \cite[Theorems 4.6 and 4.7]{MR4554058} it was proved the uniquenes among the homogeneous metrics on class $\mathcal{C}$ manifolds in the sense of \cite{MR3869430} (with the exception of some specific cases).
	Therefore, the search for non-trivial Bismut Hermitian Einstein metrics on homogeneous spaces is not just the first natural attempt, but it is also motivated by these results about the existence and uniqueness of homogeneous CYT structures.
	
	\medskip
	
	The paper is organized as follows.
	In Section \ref{sec: 2}, we present the characterizzation of Bismut flat manifolds following \cite{MR4127891}; we thus recall the construction of the complex structures on them as in \cite{MR59287}.
	Section \ref{sec: cohomology comp} is devoted to the computation of the $(1,1)$-Aeppli cohomology of the universal cover of the compact Bismut flat manifolds with finite fundamental group.
	Then, in Section \ref{sec: stability}, we use it to prove the global stability of the pluriclosed flow on these manifolds.
	Finally, in Section \ref{sec: non-flat BHE}, we use this stability property to derive the non-existence of non-flat homogeneous Bismut Hermitian Einstein metrics on C-spaces.
	
\section*{Acknowledgements} 
	I would like to thank Francesco Pediconi and Jonas Stelzig for the help provided during this work and for
	useful suggestions on the topics of this paper.
	I am also grateful to Daniele Angella for his constant support and encouragement.

\section{Complex structure on Bismut flat manifolds}\label{sec: 2}
    By an explicit construction (briefly described in Section \ref{subsec: sam constr}), Samelson showed \cite{MR59287} that any even-dimensional compact Lie group admits a left-invariant complex structure compatible with the bi-invariant metric coming from the Killing form. 
    Moreover, Alexandrov and Ivanov \cite{MR1836272} proved that any even dimensional connected Lie group equipped with a bi-invariant metric $g$ and a left-invariant complex structure which is compatible with $g$ is Bismut flat. 
    Afterward, Wang, Yang, and Zheng \cite{MR4127891} showed that up to taking the universal cover, these are the only existing compact Bismut flat manifolds. 
    In other words, simply-connected compact Bismut flat manifolds have been characterized as {\em Samelson spaces}, whose definition is as follows.
    \begin{defi}[\cite{MR4127891}]\label{def: samelson space}
		A Samelson space is a Hermitian manifold $(G, g, J)$, where $G$ is a connected and simply-connected, even-dimensional Lie group, $g$ a bi-invariant metric on $G$, and $J$ a left-invariant complex structure on $G$ that is compatible with $g$.
    \end{defi}
    By Milnor's Lemma \cite[Lemma 7.5]{MR425012}, a simply-connected Lie group $G'$ with a bi-invariant metric must be the product of a compact semisimple Lie group with an additive vector group.
    \begin{lem}[Lemma 7.5 of \cite{MR425012}]\label{lem: Milnor}
		Let $G$ be a simply-connected Lie group with a bi-invariant metric $\langle \cdot,\cdot \rangle$. Then $G$ is isomorphic and isometric to the product $G_1 \times \cdots \times G_r \times {\mathbb R}^k$ where each $G_i$ is a simply-connected compact simple Lie group and ${\mathbb R}^k$ is the additive vector group with the flat metric.
    \end{lem}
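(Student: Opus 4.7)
The plan is to reduce the statement to a Lie-algebra decomposition theorem and then integrate back to Lie groups using simple-connectedness. Throughout I would work with $\g := \Lie(G)$ equipped with the $\ad$-invariant positive definite inner product induced by $\langle \cdot , \cdot \rangle$.

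First I would establish the orthogonal-ideal principle: for any ideal $\h \subseteq \g$, the orthogonal complement $\h^\perp$ is also an ideal. This follows from $\ad$-invariance because $\langle [z,x], y \rangle = -\langle x, [z,y] \rangle = 0$ whenever $x \in \h^\perp$, $y \in \h$ and $z \in \g$. Applying this to the center $\z := \z(\g)$ gives an orthogonal splitting $\g = \z \oplus \z^\perp$ into ideals. Iterating the principle inside $\z^\perp$ would decompose it further as $\z^\perp = \g_1 \oplus \cdots \oplus \g_r$ into minimal nonzero ideals. Since pairwise orthogonal ideals commute, any ideal of a single $\g_i$ is automatically an ideal of $\g$, so minimality forces each $\g_i$ to be simple; in particular $\g_i$ cannot be abelian, otherwise it would lie in $\z \cap \z^\perp = 0$.

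Next I would show that each $\g_i$ is of compact type. Since $\ad x$ is skew-symmetric with respect to $\langle \cdot , \cdot \rangle$, the operator $(\ad x)^2$ is negative semidefinite with $\tr (\ad x)^2 \leq 0$ and equality iff $\ad x = 0$. Because $\g_i$ is simple and non-abelian, $\z(\g_i) = 0$, so the Killing form is strictly negative definite on $\g_i$. By the classical Cartan--Weyl criterion, the simply-connected Lie group $G_i$ with Lie algebra $\g_i$ is therefore compact (and simple).

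Finally I would integrate the Lie algebra decomposition to the group level. The simply-connected Lie group attached to a direct sum of Lie algebras is the direct product of the corresponding simply-connected Lie groups, which yields $G \cong \R^k \times G_1 \times \cdots \times G_r$, where $\R^k$ integrates the abelian ideal $\z$ of dimension $k := \dim \z$. The isometry assertion is then automatic, since a bi-invariant metric is determined by its value at the identity and the orthogonality of the Lie-algebra decomposition translates into a Riemannian product structure on $G$. The main conceptual step to be careful with is precisely this final integration: simple-connectedness is essential, because otherwise one would only recover $G$ up to a quotient of the product of simply-connected factors by a discrete subgroup, and both the group-theoretic and metric product structure could fail.
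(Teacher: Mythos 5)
The paper does not prove this lemma---it is quoted verbatim as Lemma 7.5 of Milnor's paper---so there is no internal argument to compare against; your proof is correct and is essentially Milnor's original one: the $\ad$-invariance of the inner product makes orthogonal complements of ideals into ideals, minimal ideals are simple or central, negative definiteness of the Killing form on the simple factors gives compactness via Weyl's theorem, and simple-connectedness lets you integrate the orthogonal Lie-algebra splitting to an isomorphism and isometry with the product. I see no gaps; in particular you correctly flag that the final integration step is where simple-connectedness is genuinely used.
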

    Taking quotients of these manifolds one obtains the {\em local Samelson spaces}.% (Definition \ref{def: loc sam space} below).
    \begin{defi}[\cite{MR4127891}]\label{def: loc sam space}
		Let $(G', g, J)$ be a Samelson space, where $G' = G \times \R^k$ with $G$ semisimple. 
		Let $\rho: \Z^k \rightarrow I(G)$ be a homomorphism into the isometry group of $G$. 
		Then $\Gamma_\rho \sim \Z^k$ acts on $G \times \R^k$ by $\gamma(x, y) = (\rho(\gamma)(x), y + \gamma)$ as isometries, and it acts freely and properly discontinuously, so one gets a compact quotient $M_\rho = (G \times \R^k)/\Gamma_\rho$. 
		If the complex structure of $G'$ is preserved by $\Gamma_\rho$, then it descends down to $M_\rho$ and makes it a complex manifold. 
		In this case, the compact Hermitian manifold $M_\rho$ is called local Samelson space. 
    \end{defi}
	Such Hermitian manifolds are Bismut flat since their universal cover is so.
	Moreover, they are all the compact Bismut flat manifolds up to finite cover.
    \begin{theo}[Theorem 1 in \cite{MR4127891}]\label{thm: Bismut flat manifolds}
		Let $(M,J, g)$ be a compact Hermitian manifold whose Bismut connection is flat.
		Then there exists a finite cover $M'$ of $M$ such that $M'$ is a local Samelson space $M_\rho$ defined as above. 
		Also, $M_\rho$ is diffeomorphic to $G \times \left(\S^1\right)^k$. %, where $\mathbb{T}^k$ is the $k$-torus.
    \end{theo}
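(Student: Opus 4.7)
The plan is to first show that the universal cover $\widetilde M$ is a Samelson space, and then to descend to a finite cover of $M$ via a finite-index subgroup of $\pi_1(M)$.

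For the first step, note that since $\nabla^B$ is a metric connection with totally skew-symmetric torsion $T^B = -Jd\omega$, the second Bianchi identity applied to $R^B \equiv 0$ yields $\nabla^B T^B = 0$ and, by the first Bianchi identity, $dT^B = 0$. On the simply-connected universal cover $\widetilde M$ the holonomy of $\nabla^B$ is trivial, so parallel transport of an orthonormal frame at a basepoint $p$ produces a global $\nabla^B$-parallel orthonormal frame $\{e_1, \dots, e_n\}$. Setting $\mathfrak{g} := T_p\widetilde M$ with bracket $[X,Y] := -T^B_p(X,Y)$, the Jacobi identity becomes the algebraic consequence of $dT^B = 0$ combined with $\nabla^B T^B = 0$, and $\mathfrak{g}$ is a Lie algebra. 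The parallel frame spans a Lie algebra of vector fields on $\widetilde M$ isomorphic to $\mathfrak g$, and the standard Cartan-type integration (equivalently, Lie's third theorem together with the simple-connectedness of $\widetilde M$) identifies $\widetilde M$ diffeomorphically with the simply-connected Lie group $G'$ having Lie algebra $\mathfrak{g}$, in such a way that the $e_i$ become left-invariant. Total skew-symmetry of $T^B$ makes $g_p$ $\mathrm{ad}$-invariant, so $g$ is bi-invariant on $G'$; the complex structure $J$, being $\nabla^B$-parallel, is left-invariant and compatible with $g$. Thus $(\widetilde M, g, J) \cong (G', g, J)$ is a Samelson space.

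For the second step, write $M = \widetilde M/\Gamma$ with $\Gamma = \pi_1(M)$ acting freely and properly by holomorphic isometries of $G'$. By Milnor's Lemma \ref{lem: Milnor}, $G' \cong G \times \R^k$ with $G$ compact semisimple, and with respect to this splitting the isometry group decomposes as $I(G') \cong I(G) \times I(\R^k)$, with $I(G)$ compact. Project $\Gamma$ to $I(\R^k)$; since $M$ is compact, the image acts cocompactly on $\R^k$ and is therefore a crystallographic group. By Bieberbach's theorem there is a finite-index subgroup $\Gamma' \subseteq \Gamma$ whose image in $I(\R^k)$ consists of translations and forms a lattice $\cong \Z^k$. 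Projection to the $I(G)$-factor then defines a homomorphism $\rho:\Z^k \to I(G)$, and the action of $\gamma \in \Gamma'$ on $G \times \R^k$ takes the form $(x,y)\mapsto (\rho(\gamma)(x), y+\gamma)$ required by Definition \ref{def: loc sam space}. Preservation of $J$ is inherited from $\Gamma$. Hence the finite cover $M' := \widetilde M/\Gamma'$ is a local Samelson space $M_\rho$, and the product structure of the action yields the diffeomorphism $M_\rho \cong G \times (\S^1)^k$.

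The purely Riemannian/complex-geometric part of the argument (the first step) is essentially a formal consequence of parallel torsion, and should proceed without subtlety. The main obstacle is the second step: one must verify that Bieberbach's theorem can be invoked on the $\R^k$-factor, i.e.\ that the projection of $\Gamma$ really is a crystallographic group, which requires controlling the compactness of the $G$-factor and ensuring that the $\R^k$-projection remains discrete and cocompact. A secondary subtlety is checking that the finite-index subgroup $\Gamma'$ can be chosen so as to still preserve the complex structure $J$; this is automatic from $\Gamma' \subseteq \Gamma$, but some care is needed to ensure that the resulting $\rho$ takes values in the subgroup of $I(G)$ that respects the left-invariant complex structure inherited from $G'$.
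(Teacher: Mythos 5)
First, a point of reference: the paper does not prove this statement at all --- it is quoted verbatim as Theorem 1 of Wang--Yang--Zheng \cite{MR4127891} --- so your attempt has to be measured against their argument. Your overall strategy (show the universal cover is a Samelson space, then tame the deck group) is the right one, but each of your two steps contains a genuine gap. In the first step, the claim that $R^B\equiv 0$ forces $\nabla^B T^B=0$ ``by the Bianchi identities'' is false for a general metric connection with totally skew-symmetric torsion: the first Bianchi identity with vanishing curvature only shows that $\nabla^B T^B$ is totally skew-symmetric and expressible through $dT^B$ and the quadratic torsion $4$-form, not that it vanishes. The classical counterexample is the round $S^7$ with the flat metric connection coming from octonion multiplication, whose skew torsion is \emph{not} parallel; this is precisely the exceptional case in the Cartan--Schouten theorem as completed by Agricola--Friedrich, which says that a complete simply-connected manifold carrying a flat metric connection with skew torsion is a Riemannian product of a Lie group with bi-invariant metric and copies of $S^7$. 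The substantive content of the Wang--Yang--Zheng proof at this stage is to use the Hermitian data (the $\nabla^B$-parallel $J$ and the fact that $T^B=-Jd\omega$) to exclude the $S^7$ factors; without that input your bracket $[X,Y]:=-T^B_p(X,Y)$ need not define a Lie algebra of parallel vector fields, and the identification of $\widetilde M$ with a Lie group breaks down.

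In the second step the gap is in the definition of $\rho$. Granting $I(G')\cong I(G)\times I(\R^k)$ and letting $\Lambda\cong\Z^k$ be the Bieberbach translation lattice, ``projection to the $I(G)$-factor'' yields a homomorphism $\rho:\Lambda\to I(G)$ only if the projection $\Gamma'\to\Lambda$ is injective, i.e.\ only if the finite group $K:=\Gamma'\cap\bigl(I(G)\times\{1\}\bigr)$ is trivial. It need not be: $\Gamma$ may contain elements of the form $(z,0)$ with $z$ a nontrivial central element of $G$ acting freely, in which case $\Gamma'$ is a finite extension of $\Z^k$ rather than $\Z^k$ itself, two elements of $\Gamma'$ can lie over the same translation, and $G'/\Gamma'$ is not of the form $M_\rho$. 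One must pass to a further finite-index subgroup on which the extension $1\to K\to\Gamma'\to\Z^k\to 1$ splits --- this is possible because $K$ is finite (first centralize $K$, then kill the class in $H^2(\Z^k;K)$ by restricting to a sublattice of index divisible by $|K|$), but it is exactly the step that produces the ``finite cover'' in the statement and cannot be elided. Your closing paragraph flags the crystallographic-group issue (which is in fact unproblematic, since compactness of $I(G)$ makes the $\R^k$-projection of $\Gamma$ discrete and cocompact) but misses both of the points above.
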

	
    %The rank of a Lie group coincides with the dimension of its maximal torus, which is the maximal compact, connected, abelian Lie subgroup.
    %Then by rank of a compact Bismut flat manifold we mean the rank of its universal cover.
	
%\subsection{Complex structures on Bismut flat manifolds}\label{sec: J on Bismut Flat}
	
	\medskip
	
    In \cite{MR994129} Pittie gave a complete description of the moduli of left-invariant complex structures on even-dimensional compact Lie groups, proving that they all come from Samelson's construction in \cite{MR59287}, namely, from a choice of a maximal torus, a complex structure on the Lie algebra of the torus, and a choice of positive roots for the {\em Cartan decomposition}. 
    Let us now recall this construction in more detail. 
	
\subsection{Cartan decomposition}\label{subsec: cartan}
    Let $G$ be an even-dimensional connected Lie group and denote by ${\mathfrak g}$ the Lie algebra of $G$ and by ${\mathfrak g}^\C $ its complexification.   
    We recall that the Killing form of $G$ is given by
    \begin{equation*}\label{eq: killing}
		B(X,Y) := \tr \left(\mbox{ad}_X \circ\mbox{ad}_Y\right),
    \end{equation*}
    where $\mbox{ad}_X(Y)=[X,Y]$ for $X,Y\in\g$.
    This is a symmetric bilinear form, and thanks to the Cartan criterion \cite[Théorème 1 in Chapitre IV]{MR1505696} it is non-degenerate if and only if $\g$ is semisimple.
    If this is the case, its opposite is a metric which we indicate with $\left< \cdot,\cdot\right>:=-B(\cdot,\cdot)$.
    Moreover, if $G$ is a simple Lie group then any invariant symmetric bilinear form on it is a scalar multiple of the Killing form.
	
    Given $K$ a maximal torus of $G$, we denote by $\k$ its Lie algebra.
    The action of $\mbox{ad}\,\k$ on $\g$ is simultaneously diagonalizable \cite[Corollary 2.23]{MR1920389}, and the eigenvalues of $\mbox{ad}\,\k$ on $\g$ are called {\em roots} of $\g$ with respect to $\k$.    
    Then one has the $\mbox{ad}(K)$-invariant {\em roots decomposition}
    \begin{equation*} 
		{\mathfrak g}^\C = {\mathfrak k}^\C \oplus \sum_{\alpha\in R} {\mathfrak g}_{\alpha}  ,
    \end{equation*}
    Where ${\mathfrak k}^\C $ denotes the complexification of ${\mathfrak k}$, $R$ is the roots space and
    $$ {\mathfrak g}_{\alpha} := \left\{v \in {\mathfrak g}^\C \; \big| \; [H, v] = \alpha(H)v \; \forall\; H \in {\mathfrak k}\right\}.$$
    The algebra $\k^\C$ is also known as {\em Cartan subalgebra} and it is unique, up to conjugation by an automorphism of $\g$ \cite[Theorem 2.15]{MR1920389}.
    Moreover, the root spaces $\g_\alpha$ are one dimensional, and it holds \cite[Proposition 2.17]{MR1920389}
    \begin{equation}\label{eq: B orthogonal}
		\langle\g_\alpha,\g_\beta\rangle = 0 \quad \text{if } \alpha+\beta\neq0.
    \end{equation}
    The bracket relations between root spaces can be easily computed and are 
    \begin{equation}\label{eq: roots brackets relations}
		[\g_\alpha,\g_\beta] \begin{cases}
			=\g_{\alpha+\beta} & \text{if }\alpha+\beta \text{ is a non-zero-root} ;\\
			=0 & \text{if } \alpha+\beta  \text{ is not a root} ;\\
			\subset \k & \text{if } \alpha+\beta=0 .
		\end{cases} 
    \end{equation}
    Furthermore, if $\alpha$ is a root, then also $-\alpha$ is a root, and $\g_{-\alpha}=\bar{\g_\alpha}$.
    Therefore, one fix an element $H\in\k$ such that $\alpha(H)\neq0$ for every $\alpha\in R$ (which exists since $R$ is finite), and says that a root $\alpha$ is {\em positive} if $\im\alpha(H)>0$.
    A choice of the positive root system leads to the {\em Cartan decomposition}, namely $\g^\C$ decomposes as
    \begin{equation}\label{eq: cartan decomposition}
		{\mathfrak g}^\C = {\mathfrak k}^\C \oplus \sum_{\alpha\in R^+} {\mathfrak g}_{\alpha} \oplus {\mathfrak g}_{-\alpha} ,
    \end{equation}
    where $R^+$ is the space of positive roots.
	
\subsection{Samelson construction}\label{subsec: sam constr}
	
    %In the following let $G$ be an even dimensional connected Lie group equipped with a bi-invariant metric $\langle \cdot ,\cdot \rangle$ and denote by ${\mathfrak g}$ the Lie algebra of $G$ and by ${\mathfrak g}^\C $ its complexification. $\langle \cdot ,\cdot \rangle$ will also denote the induced inner product on ${\mathfrak g}$. %We have $G=G_1 \times \cdots \times G_r\times {\mathbb R}^k$ as above. Since any bi-invariant metric on a compact simple Lie group is a constant multiple of the Killing form, the bi-invariant metric on each $G_i$ is unique up to constant.
	
    The left-invariant almost-complex structures $J$ on $G$ are uniquely determined by their restriction to $\g$, which we still indicate with the same symbol.
    Hence we look at them as linear maps $J: {\mathfrak g}  \rightarrow {\mathfrak g}$ such that $J^2=-\id_\g$.
    Equivalently, an almost-complex structure on $G$ is determined by the subspace $\s \subset \g^\C$ of $(1,0)$-vectors, which clearly satisfies 
    $ \s \cap \g =0, \text{ and } \s \oplus \bar\s = \g^\C . $
    Finally, the integrability condition becomes $[\s,\s]\subset\s$, and thus, the complex structures on $G$ are in one-to-one correspondence with the complex Lie subalgebras $\s \subset \g^\C$, such that 
    $$[\s,\s]\subset\s\,,\quad \s \cap \g=0\,, \quad\text{and}\quad \s \oplus \bar\s = \g^\C .$$ 
    Such subspaces are called {\em Samelson subalgebras} of $\g^\C$ \cite{MR994129}. 
	
    Samelson \cite{MR59287} first constructed examples of left-invariant complex structures on compact Lie groups as follows.
    Consider the Cartan decomposition \eqref{eq: cartan decomposition}.
    Since $\dim(G)$ is even, the abelian Lie algebra ${\mathfrak k}$ is even-dimensional as well. Thus it is possible to choose a complex structure on $\k$. 
    As before, this is equivalent to choosing a complex subalgebra ${\mathfrak a} \subset {\mathfrak k}^\C$ such that 
    $$[ {\mathfrak a}, {\mathfrak a}]\subset\mathfrak{a}\,,\quad {\mathfrak a} \cap {\mathfrak k}=0\,,\quad\text{and}\quad {\mathfrak a} \oplus \overline{{\mathfrak a}} = {\mathfrak k}^\C .$$
    Now one could simply take
    \begin{equation*}
		{\mathfrak s} = {\mathfrak a} \oplus \sum_{\alpha\in R^+} {\mathfrak g}_{\alpha}
    \end{equation*}
    to be a Samelson subalgebra of $\g$.
    Thanks to the relation in \eqref{eq: B orthogonal}, the positive root spaces are orthogonal with respect to $\langle\cdot,\cdot\rangle$.
    Therefore, for a suitable choice of Samelson subalgebra on the torus the above complex structure is Hermitian together with the Killing metric. 
    
\subsection{Bi-invariant metrics compatible with Samelson complex structures}\label{subsec: bi-inv metrics}
	
    Let us suppose that $G=G_1\times\cdots\times G_s$ is a semisimple Lie group.
    The Lie algebra of the maximal torus splits as
    \[
		\k=\k_1\oplus\cdots\oplus\k_s
    \]
    with respect to the Killing metric.
    We say that a Samelson complex structure $J$ is {\em reducible} if there exist indexes $i_1,\ldots,i_p$ with $1\leq p<s$ such that 
    \[ J(\k_{i_1}\oplus\cdots\oplus\k_{i_p})\subset\k_{i_1}\oplus\cdots\oplus\k_{i_p}.\]
    It is {\em irreducible} if it is not reducible.
    Notice that a semisimple Lie group $G$ equipped with a Samelson complex structure $J$ can be always written as a product of Lie groups $G=G_1\times\cdots\times G_s$ such that the restriction of $J$ on any of the $G_i$ is an irreducible Samelson complex structure.

    Consider a bi-invariant metric $g$ on $G$ and the decomposition of the Lie algebra in simple pieces with respect to the Killing form
    $$\g = \g_1 \oplus \cdots \oplus \g_k.$$
    We claim that the $\g_i$ are $g$-orthogonal one to each other.
    As a matter of fact, consider the $1$-form on $G$ given by
    $$ q = g (x,\cdot) \quad \text{for an arbitrary } x\in\g_1. $$
    Then since $g$ is $\ad(G)$-invariant for any pair of elements $v,w\in\g_2\oplus\cdots\oplus\g_k$ it holds
    $$ q([v,w])=g(x,[v,w])=g([v,x],w) = 0. $$
    This implies that $q_{|_{\g_2\oplus\cdots\oplus\g_k}}=0$ since $[\g_2\oplus\cdots\oplus\g_k,\g_2\oplus\cdots\oplus\g_k]=\g_2\oplus\cdots\oplus\g_k$, and the same holds for the other $\g_i$'s.
    Moreover, the metric $g$ must be a multiple of the Killing form once restricted to any simple component $G_i$.
    Therefore, we can conclude that any bi-invariant metric $g$ on $G = G_1\times\cdots\times G_k$ is
    $$ g = - \lambda_1 B_1 - \lambda_2 B_2 - \cdots - \lambda_s B_k, $$
    for positive coefficients $\lambda_i\in\R_{>0}$ and $B_i$ the Killing form on $G_i$.
    
    To sum up, suppose that $G$ is a semisimple Lie group equipped with a bi-invariant metric $g$ and a compatible Samelson complex structure $J$ which decomposes in $s$ irreducible components.
    Thus $G = G_1\times\cdots\times G_s$ and $\left(G_i,J_{|_{G_i}}\right)$ are complex manifolds with irreducible Samelson complex structures $J_{|_{G_i}}$.
    Then the bi-invariant metrics on $G$ compatible with $J$ form an $s$-dimensional family since they are all of the form
    $$ \widetilde{g} = - \lambda_1 \,g_{|_{G_1}} - \lambda_2\, g_{|_{G_2}} - \cdots - \lambda_s\, g_{|_{G_s}}, $$
    for positive coefficients $\lambda_i\in\R_{>0}$.

\section{Cohomology of semisimple Lie groups}\label{sec: cohomology comp}

    Let $(G,J)$ be a semisimple Lie group of rank $r$ with maximal torus $T= \left(\S^1\right)^r \subset G$, and equipped with a Samelson complex structure $J$.
    The Tits fibration, defined as
    $$ \phi : G \rightarrow G/T: g \mapsto g\cdot T, $$
    gives a toric fibration
    $$ T \hookrightarrow G \xrightarrow{\phi} G/T$$
    over the {\em generalized flag manifold} $\left(G/T,J_{G/T}\right)$.
    We recall that the generalized flag manifolds are K\"ahler \cite{Borel}, and that they can be endowed with a (unique) invariant K\"ahler–Einstein metric \cite[Section 5]{MR303478}.
    Therefore, using Chern--Weil theory following \cite{MR1255937} it is possible to understand the Dolbeault cohomology of $(G,J)$.
    We thus combine it with the results from \cite{stelzig23} in order to compute the Aeppli cohomology of semisimple Lie groups.
    As we will see in the proof of Theorem \ref{th: global stability}, and in Remark \ref{rmk: crucial}, the Aeppli cohomology is particularly well-suited for studying pluriclosed metrics.
    As a matter of fact, the Aeppli cohomology groups of a complex manifold $(M,J)$ are defined as (see~\cite{MR0221536})
    $$ H^{p,q}_{A}(M,J)=\frac{\ker\left\{ \p\circ\bar\p:\A^{p,q}_J(M)\rightarrow\A^{p+1,q+1}_J(M)\right\}}{\imm\left\{\p:\A^{p-1,q}_J(M)\rightarrow\A^{p,q}_J(M)\right\}+\imm\left\{\bar\p:\A^{p,q-1}_J(M)\rightarrow\A^{p,q}_J(M)\right\}} . $$
    Therefore, the pluriclosed metrics always give a representative in the $H^{1,1}_A(M,J)$.\\

    Consider the connection one-form $\theta\in\A^1(G,\k)$ associated to the Tits fibration $\phi$.
    We indicate with $\omega\in\A^2(G/T,\k)$ the associated curvature form, $d\theta = \phi^*\omega$.
    We then define the maps on the dual of the Lie algebra of the torus
    $$\Phi : \k^* \rightarrow \A^1(G) : v \mapsto v \circ \theta$$
    and
    $$ D : \k^* \rightarrow  \A^2(G/T): v \mapsto  v\circ\omega,$$
    contracting respectively the toric components of $\theta$ and $\omega$.
    In particular, if we choose an orthonormal basis for $\k$ with respect to the Killing form 
    $$\k = \left<z_1,\ldots,z_r\right>$$
    the forms $\theta$ and $\omega$ can be written as
    $$\theta = (\theta_1,\ldots,\theta_r),\quad \text{and } \omega=(\omega_1,\ldots,\omega_r).$$
    Taking the dual basis 
    $$\k^* = \left<\xi^1,\ldots,\xi^r\right>,$$ 
    the maps $\Phi$ and $D$ are such that
    $$\Phi(\xi^i) = \theta_i, \quad\text{and}\quad D(\xi^i)=\omega_i.$$
    Now suppose we have a model for the Dolbeault and conjugate Dolbeault cohomologies of $G/T$
    $$ \Psi: \left(\mathcal{Y},\eth,\bar\eth\right) \xrightarrow{\sim} \left(\A^{\bullet,\bullet}(G/T),\p,\bar\p\right). $$
    The Dolbeault cohomology of $G/T$ satisfies $H^2(G/T) = H^{1,1}_{\bar\p}\left(G/T,J_{G/T}\right)$ \cite[Paragraph 14.10]{MR102800}.
    Therefore, by applying Proposition 8 of \cite{MR1255937} we obtain a model for the Dolbeault and conjugate Dolbeault cohomology of $G$ as follows:
    \begin{equation}\tag{\bf Tanré}\label{eq: Tanré model}
        \Psi\otimes\Phi : \left(\mathcal{Y}\otimes \bigwedge\left(\k^{1,0}\oplus\k^{0,1}\right), \delta, \bar\delta \right) \xrightarrow{\sim} \left(\A^{\bullet,\bullet}(G),\p,\bar\p\right),
    \end{equation}
    where $\left(\k^*\right)^\C = \k^{1,0}\oplus\k^{0,1}$ is the decomposition with respect to $J$, and $\delta,\bar\delta$ satisfy
    \begin{equation*}
        \delta v =
        \begin{cases}
            \eth v    & \text{if } v\in\mathcal{Y},\\
            0         & \text{if } v\in\k^{1,0},\\
            D(v)      & \text{if } v\in\k^{0,1};
        \end{cases}
        \quad
        \bar\delta v =
        \begin{cases}
            \bar\eth v    & \text{if } v\in\mathcal{Y},\\
            D(v)        & \text{if } v\in\k^{1,0},\\
            0           & \text{if } v\in\k^{0,1}.
        \end{cases}
    \end{equation*}
    As stated in \cite[Paragraph 14.10]{MR102800}, the Dolbeault cohomology of $G/T$ is all concentrated on bi-degrees $(p,p)$.
    It then immediately follows by applying the above model for the Dolbeault cohomology of $G$ that $H^{0,1}_{\bar\p}(G,J)=r$ and $H_{\bar\p}^{p,0}(G,J) = 0$ for all $p>0$.
    Notice that this argument was used in \cite[Lemma 6.1]{MR4032184} to prove that in particular the $(3,0)$-Dolbeault cohomology vanishes.
    
    We now clearify what is known about the Dolbeault cohomology of $G/T$, in order to take as $\Psi$ the simplest possible model.    
    We start by saying that the cohomology $H^*\left(G/T\right)$ is all generated by the unit and its $2$-dimensional classes, see \cite[Section 26]{MR51508}.
    These classes are precisely the ones of the $\omega_i$ as described in \cite[Proposition 14.6]{MR102800}, and the only relations among them are given by the polynomials $\R[\omega_1,\ldots,\omega_r]$ which are invariant with respect to the action of the {\em Weil group} $W_G:=N(T)/T$ (for $N(T)$ the normalizer of $T$ in $G$), see \cite[Proposition 27.1]{MR51508}.
    Finally, comparing the result in \cite[Proposition 27.2]{MR51508} with the de Rham cohomology of compact simple Lie groups one deduces that the first non-trivial relation among these $2$-forms occurs in degree $2$, in other words, it is a quadratic relation in the $\omega_i$.
    Since the Killing form is bi-invariant, and hence in particular invariant by the action of the Weil group, this quadratic relation must correspond to it in the following sense:
    $$\omega_1\wedge\omega_1 + \omega_2\wedge\omega_2 + \cdots + \omega_r\wedge\omega_r = 0 .$$
    In particular, for a semisimple Lie group, we have as many quadratic relations as the number of simple components: let us first distinguish the coordinates on the maximal torus as
    $$\k = \left<z_1^1,\ldots,z^1_{n_1},\ldots,z^s_1,\ldots,z^s_{n_s}\right>,$$
    where the upper indexes indicate the simple component they belong;
    if we arrange the characteristic classes accordingly, then the quadratic relations are all of the type
    \begin{equation}\label{eq: quadratic relations}
        \sum_{i=1}^s b_i\sum_{j=1}^{n_i}\omega^i_j\wedge\omega^i_j=0 \quad \text{for } b_i\in\R,
    \end{equation}
	i.\,e. corresponding to the bi-invarian metrics on $G=G_1\times\cdots\times G_s$.
%    \begin{prop}
 %       Let $G$ be a semisimple Lie group of rank $r$ equipped with a Samelson complex structure $J$.
  %      Then 
   %     $$H_{\bar\p}^{1,1}(G,J)\cong \C^r.$$
    %\end{prop}
%    \begin{proof}
 %       Using the model for the Dolbeault cohomology of $(G,J)$ one sees that
  %      $H_{\bar\p}^{1,1}(G,J)$ is generated by the $(1,1)$-forms $\omega_i$.
   %     Then the result follows by the fact that there are no non-trivial relations of order one between the $\omega_i$. 
    %    \textcolor{red}{potrebbero essere esatte}
    %\end{proof}

%    We remark that even if the forms $\omega_i$ in the model are not $\bar\delta$-exact they are in $\imm\left(\delta\right)\cup\imm\left(\bar\delta\right)$, hence they are not representatives in the $H^{1,1}_A$ cohomology.
    
    We now compute the $(1,1)$-Aeppli cohomology of $(G,J)$.
    We first restrict to the case of irreducible Samelson complex structure; then the general case will follow by a K\"unneth formula.

    \begin{theo}\label{th: 1,1 A cohomology}
        Let $G$ be a semisimple Lie group equipped with a bi-invariant metric $g$ and a compatible irreducible Samelson complex structure $J$.
        Then 
        $$H_{A}^{1,1}(G,J)\cong \C.$$
    \end{theo}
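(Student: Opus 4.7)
The plan is to apply the Tanr\'e model \eqref{eq: Tanré model} with $\mathcal{Y}$ taken to be the bi-graded cohomology ring $H^{\bullet,\bullet}(G/T;\C)$ equipped with trivial differentials. This is a legitimate bi-graded model for computing Aeppli cohomology: $G/T$ is a generalized flag manifold, hence K\"ahler, so the $\p\bar\p$-lemma holds and $\mathcal{Y}$ correctly captures all natural bi-graded cohomologies of $G/T$ (and hence, through the Tanr\'e quasi-isomorphism, of $G$) by the formality formalism of \cite{stelzig23}. In bi-degree $(1,1)$ the model reduces to $\mathcal{Y}^{1,1}\oplus(\k^{1,0}\wedge\k^{0,1})$, with $\mathcal{Y}^{1,1}$ spanned by the characteristic classes $\omega_{1},\ldots,\omega_{r}$.

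The next step is a direct computation with $\delta,\bar\delta$. By $\C$-linear extension, $D$ becomes an isomorphism $\k^{*}\otimes\C\xrightarrow{\sim}\mathcal{Y}^{1,1}$, producing the decomposition $\mathcal{Y}^{1,1}=D(\k^{1,0})\oplus D(\k^{0,1})$. Using \eqref{eq: Tanré model} directly, $\delta(\k^{0,1})=D(\k^{0,1})$ and $\bar\delta(\k^{1,0})=D(\k^{1,0})$, so that $\mathrm{im}(\delta)+\mathrm{im}(\bar\delta)$ already exhausts all of $\mathcal{Y}^{1,1}$ inside the $(1,1)$-piece. A Leibniz rule computation further gives $\delta\bar\delta(\zeta\wedge\bar\zeta')=D(\zeta)\wedge D(\bar\zeta')\in\mathcal{Y}^{2,2}$ for $\zeta\in\k^{1,0},\bar\zeta'\in\k^{0,1}$, while $\delta\bar\delta$ vanishes on $\mathcal{Y}^{1,1}$. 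Combining these observations, $H_{A}^{1,1}(G,J)$ is identified with the kernel of the multiplication map
\[ \mu\colon D(\k^{1,0})\otimes D(\k^{0,1})\longrightarrow\mathcal{Y}^{2,2}. \]

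It then suffices to show that $\dim_{\C}\ker\mu=1$. The ring $\mathcal{Y}$ is $\C[\omega_{1},\ldots,\omega_{r}]$ modulo the ideal $I$ of $W_{G}$-invariant polynomials of positive degree; its degree-two part $I^{2}$ is the $n$-dimensional space spanned by the quadratic relations \eqref{eq: quadratic relations}, one per simple factor of $G$, dual to the space of $\mathrm{Ad}(G)$-invariant symmetric bilinear forms on $\g$. An element of $D(\k^{1,0})\otimes D(\k^{0,1})$ lies in $\ker\mu$ exactly when the corresponding quadratic polynomial in the $\omega_{i}$ belongs to $I^{2}$; by its very shape, such a polynomial is \emph{purely mixed} in the sense of lying inside $D(\k^{1,0})\cdot D(\k^{0,1})\subset\mathcal{Y}^{2,2}$. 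The essential linear-algebra input is that an $\mathrm{Ad}(G)$-invariant symmetric form $B$ on $\g$, extended $\C$-linearly, satisfies $B(\k^{1,0},\k^{1,0})=B(\k^{0,1},\k^{0,1})=0$ if and only if $B$ is $J$-invariant; hence the relation it contributes to $I^{2}$ is purely mixed exactly in that case. By Subsection~\ref{subsec: bi-inv metrics}, the irreducibility of $J$ cuts the $n$-dimensional space of bi-invariant forms down to a $1$-dimensional family of $J$-invariant ones (spanned by $g$). Therefore $\ker\mu$ is $1$-dimensional and $H_{A}^{1,1}(G,J)\cong\C$.

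The step I expect to require the most care is this last identification of $\ker\mu$ with the space of $J$-invariant bi-invariant symmetric forms on $\g$. Its core is the elementary observation that, on a complex vector space, a $J$-invariant symmetric bilinear form pairs the $(1,0)$ and $(0,1)$ subspaces nondegenerately and vanishes on each separately; once this linear-algebra translation is in place, the one-dimensionality of $\ker\mu$ is forced by the very definition of irreducibility for Samelson complex structures used in Subsection~\ref{subsec: bi-inv metrics}.
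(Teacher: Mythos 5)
Your proposal is correct and follows essentially the same route as the paper: the Tanr\'e model upgraded to Aeppli cohomology via Stelzig's Theorem C, the identification of $H^{1,1}_A(G,J)$ with the kernel of the cup product $\k^{1,0}\wedge\k^{0,1}\to H^{2,2}(G/T)$, and the fact that the degree-two relations --- one per simple factor, dual to the bi-invariant symmetric forms --- combined with irreducibility leave a one-dimensional kernel. The only difference is presentational: where the paper runs an explicit matrix computation (setting $Q=A-\im JA-\im AJ-JAJ$, observing $JQ=\im Q=QJ$, and excluding antisymmetric solutions), you package the same linear algebra as the equivalence ``purely mixed $\Leftrightarrow$ $J$-invariant'' and invoke Section \ref{subsec: bi-inv metrics}; just note explicitly that the purely mixed subspace and the span of the relations are both conjugation-stable, so the complex-coefficient kernel is the complexification of the real space of $J$-invariant bi-invariant forms.
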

    \begin{proof}  
        First of all, fix $r$ the rank of $G$ and $s$ the number of simple components, $G=G_1\times\cdots\times G_s$.
        Then, notice that since the metric $g$ is bi-invariant it equals
        $$g = -\lambda_1 B_{|_{\g_1}} -\lambda_2 B_{|_{\g_2}} - \cdots -\lambda_s B_{|_{\g_s}} ,$$
        where $B$ is the Killing form of $G$ (see Section \ref{subsec: bi-inv metrics}).
        Therefore we can scale the coordinates $\left<z_j^i\right>$ %as $w^i_j=\frac{1}{\lambda_i}z^i_j$ 
        in order to obtain an orthonormal basis for $g$.
        Let us keep the same symbols for the coordinates $\left<z_i\right>$ their dual $\left<\xi_i\right>$ and the connection and curvature forms $\theta_i$ and $\omega_i$ for the sake of simplicity.

        Since \eqref{eq: Tanré model} is a model for Dolbeault and conjugate Dolbeault cohomology, thanks to the result of Stelzig \cite[Theorem C]{stelzig23}, it is also a model for Aeppli and {\em Bott--Chern} cohomologies.
        We thus can use this model to construct a bi-complex quasi-isomorphic to $\left(\A^{\bullet,\bullet}(G),\p,\bar\p\right)$ with respect to the Aeppli cohomology.
        Starting from the bi-complex freely-generated by the $\omega_i$'s and $\k^{1,0},\k^{0,1}$, namely
        $$ \left(\bigwedge\left(\omega_1,\ldots,\omega_r\right) \otimes \bigwedge\left(\k^{1,0}\oplus\k^{0,1}\right), \delta, \bar\delta \right) ,$$
        we obtain our bi-complex computing all cohomologies by erasing dots in the bi-degrees $(p,p)$ corresponding to the $W_G$-invariant polynomials in $\R[\omega_1,\ldots,\omega_r]$.
        This affects the lower degrees starting from bi-degree $(2,2)$ with the relations \eqref{eq: quadratic relations}.
        Since we are interested in the $(1,1)$-Aeppli cohomology we only care about the {\em squares} of the preliminary bi-complex which become L-shaped {\em zig-zags} after erasing dots in the bi-degrees $(2,2)$ as in the following figure (we refer to \cite[Section 1.1.4]{stelzig23} for the notation of squares and zig-zags).% \ref{fig: square to zigzag}.
        \begin{figure}[h!] \begin{center}\begin{tikzpicture}
            \newcommand\unito{1}
            
            \draw[help lines, step=\unito] (0,0) grid (3.3*\unito,3.3*\unito);
    
            \foreach \x in {0,...,2}
              \node at (\unito*0.5+\unito*\x,-0.3) {\x};
            \foreach \y in {0,...,2}
              \node at (-0.3,\unito*0.5+\unito*\y) {\y};
            \node at (-0.3, \unito*3.2) {$\vdots$};
            \node at (\unito*3.2, -0.3) {$\cdots$};

            \draw[help lines, step=\unito] (5*\unito,0) grid (8.3*\unito,3.3*\unito);

            \foreach \x in {0,...,2}
              \node at (\unito*5.5+\unito*\x,-0.3) {\x};
            \foreach \y in {0,...,2}
              \node at (\unito*5-0.3,\unito*0.5+\unito*\y) {\y};
            \node at (\unito*5-0.3, \unito*3.2) {$\vdots$};
            \node at (\unito*8.2, -0.3) {$\cdots$};

            \node at (\unito*4,\unito*1.6) {$\rightarrow$};

            \coordinate (11) at (3/2*\unito,3/2*\unito);
            \coordinate (12) at (3/2*\unito,5/2*\unito);
            \coordinate (21) at (5/2*\unito,3/2*\unito);
            \coordinate (22) at (5/2*\unito,5/2*\unito);

            \fill (11) circle (2.2pt);
            \fill (21) circle (2.2pt);
            \fill (12) circle (2.2pt);
            \fill (22) circle (2.2pt);
    
            \draw (11) -- (12);
            \draw (11) -- (21);
            \draw (21) -- (22);
            \draw (12) -- (22);

            \coordinate (11b) at (13/2*\unito,3/2*\unito);
            \coordinate (12b) at (13/2*\unito,5/2*\unito);
            \coordinate (21b) at (15/2*\unito,3/2*\unito);

            \fill (11b) circle (2.2pt);
            \fill (21b) circle (2.2pt);
            \fill (12b) circle (2.2pt);
    
            \draw (11b) -- (12b);
            \draw (11b) -- (21b);

            \end{tikzpicture} 
                %\caption{}
                %\label{fig: square to zigzag}
            \end{center}
        \end{figure} 
        
        We remark that all the forms $\omega_i$ are in $\imm\left(\delta\right)\cup\imm\left(\bar\delta\right)$ as
        \begin{equation}\label{eq: omega_i are aeppli-exact}
            2\omega_i = \delta\left(\xi^i + \im J \xi^i \right) + \bar\delta\left(\xi^i - \im J \xi^i \right).
        \end{equation}
        On the other hand, all the $(1,1)$-forms in $\k^{1,0}\wedge\,\k^{0,1}$ belong to a square in the preliminary bi-complex.
        More concretely, given an element
        $$ \sum_{i,j=1}^r A_i^j \left(\xi^i -\im J \xi^i \right) \wedge \left(\xi^j +\im J \xi^j \right) \in \k^{1,0}\wedge\k^{0,1},$$
        it is not $\delta$ nor $\bar\delta$ exact, and it holds
        \begin{multline*}
             -\bar\delta\delta\left( \sum_{i,j=1}^r A_i^j \left(\xi^i -\im J \xi^i \right) \wedge \left(\xi^j +\im J \xi^j \right) \right)
             = \bar\delta\left(\sum_{i,j=1}^r A_i^j \left(\xi^i -\im J \xi^i \right) \wedge D\left(\xi^j +\im J \xi^j \right) \right)\\
             = \bar\delta\left(\sum_{i,j=1}^r A_i^j \left(\xi^i -\im J \xi^i \right) \wedge \left(\omega^j +\im \sum_{q=1}^r J^j_q \omega^q  \right) \right)
             = \bar\delta\left( \sum_{i,j=1}^r \left(A_i^j +\im \sum_{q=1}^r J_j^q A_i^q \right) \left(\xi^i -\im J \xi^i \right) \wedge \omega^j  \right)\\
             = \sum_{i,j=1}^r \left(A_i^j +\im \sum_{q=1}^r J_j^q A_i^q \right) D\left(\xi^i -\im J \xi^i \right) \wedge \omega^j 
             = \sum_{i,j=1}^r \left(A_i^j +\im \sum_{q=1}^r J_j^q A_i^q \right) \left(\omega^i -\im \sum_{p=1}^r J^i_p \omega^p \right) \wedge \omega^j \\
             = \sum_{i,j=1}^r \left(A_i^j +\im \sum_{q=1}^r J_j^q A_i^q -\im \sum_{p=1}^r J^p_i A_p^j +\sum_{p,q=1}^r J^p_i J^q_j A_p^q    \right) \omega^i \wedge \omega^j \\
             = \sum_{i,j=1}^r \left(A -\im AJ -\im JA - JAJ \right)^j_i \omega^i \wedge \omega^j,
        \end{multline*}
        where $\left(J^j_k\right)$ is the matrix associated to the endomorphism $J$ in the coordinates $\left<z_i\right>$.
        We thus obtain a quadratic relation in the $\omega_i$'s.
        Hence we are now interested in understanding how many of these squares break in zig-zags giving representatives in $H^{1,1}_A(G,J)$.
        Thanks to equation \eqref{eq: quadratic relations}, this is equivalent to computing how many solutions there are of
        \begin{equation}\label{eq: crucial}
            Q + Q^t = 
            \begin{pmatrix}
            b_1 \id &         &        &         \\
                    & b_2 \id &        &         \\
                    &         & \ddots &         \\
                    &         &        & b_s \id
            \end{pmatrix}
        \end{equation}
        where $Q:=A-\im JA - \im AJ - JAJ$.
        Notice that $JQ = \im Q$ and $QJ = \im Q$.
        Therefore,
        \begin{equation}\label{eq: 1}
            -J\left( Q + Q^t\right)J = Q + Q^t .
        \end{equation}
        This forces the coefficients $b_i$ in \eqref{eq: crucial} to be all equal.
%        \begin{equation}\label{eq: matrix}
 %           -J \begin{pmatrix}
  %              b_1 \id &         &        &         \\
   %                 & b_2 \id &        &         \\
    %                &         & \ddots &         \\
     %               &         &        & b_s \id
      %      \end{pmatrix} J = 
       %     \begin{pmatrix}
        %        b_1 \id &         &        &         \\
         %           & b_2 \id &        &         \\
          %          &         & \ddots &         \\
           %         &         &        & b_s \id
            %\end{pmatrix}
        %\end{equation}
        Indeed, by multiplying \eqref{eq: 1} on the left by $J$ and using \eqref{eq: crucial} we have
        \[
            \begin{pmatrix}
            \cdots -b_1Jz_1^1 \cdots \\
            \vdots \\
            \cdots -b_kJz_k^l \cdots \\
            \vdots \\
            \cdots -b_sJz^s_{n_s} \cdots 
            \end{pmatrix} =
            \begin{pmatrix}
            \vdots & & \vdots & & \vdots \\
            b_1Jz_1^1 & \cdots & b_kJz^k_l & \cdots & b_sJz^s_{n_s} \\
            \vdots & & \vdots & & \vdots
            \end{pmatrix}.
        \]
        Therefore, if there exist two pairs $k=1,\ldots,s$, $l=1,\ldots, n_k$ and $p=1,\ldots,s$, $q=1,\ldots, n_p$ with $p\neq k$ such that $g(Jz_l^k,z^p_q)\neq 0$ it must be $b_k=b_p$.
        Finally, since the complex structure is irreducible we can guarantee this to happen and get $b_1=b_2=\cdots=b_s$.  

        A straightforward computation shows that there is no non-zero matrix $Q$ which is anti-symmetric and satisfies $JQ=\im Q=QJ$.
        Thus, there is only one solution (up to scalar multiplication) which is given by taking $A = \id$.
        It corresponds to the $(1,1)$-form in $\k^{1,0}\wedge\,\k^{0,1}$
        $$ 2\im\sum_{j=1}^r \xi^j\wedge J\xi^j ,$$
        which is in the same Aeppli cohomology class of the metric.
        Consequently, there is only one $(1,1)$-representative for the Aeppli cohomology of $(G,J)$, that is% the bi-invariant metric $g$. Thus
        $$H^{1,1}_{A}(G,J) = \C\left<[ g(J\cdot,\cdot)]\right>.$$
    \end{proof}

    We know that $h^{0,1}_{\bar\p}(G,J)= r$ and $h^{p,0}_{\bar\p}(G,J)=0$ for $p\geq 1$, where $h^{\bullet,\bullet}_{\cdot\cdot}(G,J) = \dim H^{\bullet,\bullet}_{\cdot\cdot}(G,J)$.
    Therefore, in the bi-complex of $(G,J)$, there are $r$ zig-zags starting (horizontally) from bi-degree $(0,1)$. 
    These zig-zags can not have odd length (one or three) because that will count in de Rham cohomology which we know is $H^1(G)=0$.
    Moreover, they can not have length four because this would imply $H^{2,0}_{\bar\p}(G,J)\neq0$.
    Therefore they must have length equal to two, which implies 
    $$h_{\bar\p}^{1,1}(G,J) \geq h_{\bar\p}^{0,1}(G,J) = r.$$
    We recall that the $(1,1)$-forms coming from the torus belong to squares or L-shaped zig-zags, hence they do not contribute to Dolbeault cohomology in bi-degree $(1,1)$. 
    Consequently, the dimension of the $(1,1)$-Dolbeault cohomology equals the rank of $G$ since $H^{1,1}_{\bar\p}(G,J)$ is generated by the forms $\omega_i$ for $i=1,\ldots,r$ which satisfy \eqref{eq: omega_i are aeppli-exact}.

    We picture the information collected up to now in the diagram in Figure \ref{fig: doublecmplex G}, where we define $\nu_i := \xi^i +\im J \xi^i$ for $i=1,\ldots,r$.

    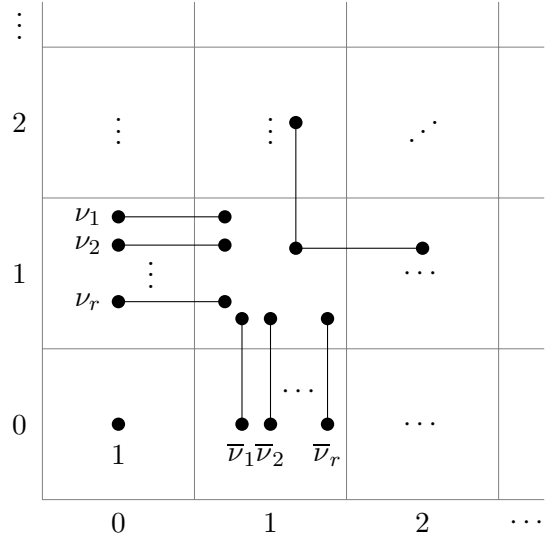
\begin{figure}[h!] \begin{center}\begin{tikzpicture}
        \newcommand\unito{2}
        
        \draw[help lines, step=\unito] (0,0) grid (3.3*\unito,3.3*\unito);

        \foreach \x in {0,...,2}
          \node at (\unito*0.5+\unito*\x,-0.3) {\x};
        \foreach \y in {0,...,2}
          \node at (-0.3,\unito*0.5+\unito*\y) {\y};

        \node at (-0.3, \unito*3.2) {$\vdots$};
        \node at (\unito*3.2, -0.3) {$\cdots$};
        \node at (2*\unito+1/2*\unito, 2*\unito+1/2*\unito) {$\iddots$};
        \node at (5/2*\unito,1/2*\unito) {$\cdots$};
        \node at (1/2*\unito,5/2*\unito) {$\vdots$};
        \node at (5/2*\unito,3/2*\unito) {$\cdots$};
        \node at (3/2*\unito,5/2*\unito) {$\vdots$};

        \coordinate (00) at (1/2*\unito,1/2*\unito);
        
        \coordinate (01) at (1/2*\unito,30/16*\unito);
        \coordinate (11b) at (6/5*\unito,30/16*\unito);
        \coordinate (01bis) at (1/2*\unito,27/16*\unito);
        \coordinate (11bbis) at (6/5*\unito,27/16*\unito);
        \coordinate (01tri) at (1/2*\unito,21/16*\unito);
        \coordinate (11btri) at (6/5*\unito,21/16*\unito);
        
        \coordinate (10) at (30/16*\unito,1/2*\unito);
        \coordinate (11a) at (30/16*\unito,6/5*\unito);
        \coordinate (10bis) at (24/16*\unito,1/2*\unito);
        \coordinate (11abis) at (24/16*\unito,6/5*\unito);
        \coordinate (10tri) at (21/16*\unito,1/2*\unito);
        \coordinate (11atri) at (21/16*\unito,6/5*\unito);
        
        \coordinate (11c) at (5/3*\unito,5/3*\unito);
        \coordinate (21) at (5/2*\unito,5/3*\unito);
        \coordinate (12) at (5/3*\unito,5/2*\unito);

        \fill (00) circle (2.5pt);
        \fill (01) circle (2.5pt);
        \fill (01bis) circle (2.5pt);
        \fill (01tri) circle (2.5pt);
        \fill (10) circle (2.5pt);
        \fill (10bis) circle (2.5pt);
        \fill (10tri) circle (2.5pt);
        \fill (11a) circle (2.5pt);
        \fill (11abis) circle (2.5pt);
        \fill (11atri) circle (2.5pt);
        \fill (11b) circle (2.5pt);
        \fill (11bbis) circle (2.5pt);
        \fill (11btri) circle (2.5pt);
        \fill (11c) circle (2.5pt);
        \fill (12) circle (2.5pt);
        \fill (21) circle (2.5pt);
        
        \node at (1/2*\unito,1/2*\unito-.4) {$1$};
        \node at (-.4+1/2*\unito,30/16*\unito) {$\nu_1$};
        \node at (-.4+1/2*\unito,27/16*\unito) {$\nu_2$};
        \node at (-.4+1/2*\unito,21/16*\unito) {$\nu_r$};
        \node at (5/7*\unito,.1+3/2*\unito) {$\vdots$};
        \node at (30/16*\unito,1/2*\unito-.4) {$\bar\nu_r$};
        \node at (24/16*\unito,1/2*\unito-.4) {$\bar\nu_2$};
        \node at (21/16*\unito,1/2*\unito-.4) {$\bar\nu_1$};
        \node at (3/2*\unito+.4,5/7*\unito) {$\cdots$};

        \draw (10) -- (11a);
        \draw (01) -- (11b);
        \draw (01bis) -- (11bbis);
        \draw (01tri) -- (11btri);
        \draw (10bis) -- (11abis);
        \draw (10tri) -- (11atri);
        \draw (21) -- (11c) -- (12);

        \end{tikzpicture} 
            \caption{Zig-zags of the double complex of $(G,J)$ involving bi-degrees $(1,1)$ or lower, up to squares.}
            \label{fig: doublecmplex G}
        \end{center}
    \end{figure}

    In order to extend Theorem \ref{th: 1,1 A cohomology} to a semisimple Lie group equipped with a general Samelson complex structure we use the K\"unneth formula for the Aeppli cohomology computed in \cite{stelzig23}.
    Notice that the following result can also be obtained with a direct computation as in Theorem \ref{th: 1,1 A cohomology} without using K\"unneth's formula.
    
    \begin{cor}\label{cor: 1,1 A cohomology reducible case}
        Let $G$ be a semisimple Lie group equipped with a bi-invariant metric $g$ and a compatible Samelson complex structure $J$.
        Suppose that $J$ decomposes in $s$ irreducible components, then 
        $$H_{A}^{1,1}(G,J)\cong \C^s.$$
    \end{cor}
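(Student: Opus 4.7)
The plan is to reduce to Theorem \ref{th: 1,1 A cohomology} via the K\"unneth formula for Aeppli cohomology of \cite{stelzig23}. Write $G = G_1 \times \cdots \times G_s$ as a product where each $(G_i, J|_{G_i})$ has an irreducible Samelson complex structure, so that the double complex of $G$ is the tensor product of those of the $G_i$'s in the Tanr\'e model \eqref{eq: Tanré model}. By Theorem \ref{th: 1,1 A cohomology} each factor contributes one $(1,1)_A$-class $[g|_{G_i}]$, which pulls back under the projection $\pi_i\colon G \to G_i$ to a class on $G$; the resulting $s$ classes are linearly independent since they live in distinct summands $\k^{1,0}_i \wedge \k^{0,1}_i$ of the model, giving the lower bound $\dim H^{1,1}_A(G,J) \geq s$.

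The main obstacle is to check that no further $(1,1)_A$-classes arise as K\"unneth ``cross-terms''. The only candidates landing at bi-degree $(1,1)$ come from tensor products of length-$2$ zig-zags of different factors (a horizontal one at $(0,1)$--$(1,1)$ from one factor with a vertical one at $(1,0)$--$(1,1)$ from another, cf.\ Figure \ref{fig: doublecmplex G}); other combinations land at other bi-degrees or are ruled out by vanishing pieces such as $h^{p,0}_{\bar\partial}(G_i)=0$ for $p\geq 1$. A direct graded Leibniz computation shows that the tensor product of such a horizontal length-$2$ zig-zag of $G_i$ with a vertical length-$2$ zig-zag of $G_j$ (for $i\neq j$) yields a complete $4$-dot square in the product bi-complex---with vertices $\bar\eta\otimes\nu$, $D(\bar\eta)\otimes\nu$, $\bar\eta\otimes D(\nu)$, $D(\bar\eta)\otimes D(\nu)$ and all four edges non-zero---and squares contribute trivially to Aeppli cohomology. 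This yields $H^{1,1}_A(G, J) \cong \C^s$.

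Alternatively, one can bypass K\"unneth entirely by mirroring the linear algebra of the proof of Theorem \ref{th: 1,1 A cohomology}: equation \eqref{eq: crucial} admits exactly $s$ independent solutions---one free parameter $b_i$ per irreducible component of $J$---since the argument forcing all the $b_i$'s to be equal relied crucially on $J$ mixing different simple blocks of $\k$, a property that fails precisely when $J$ is reducible.
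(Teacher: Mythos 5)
Your proposal is correct and follows essentially the same route as the paper: decompose $G$ into the $s$ irreducible factors, invoke Stelzig's K\"unneth formula for Aeppli cohomology to reduce to Theorem \ref{th: 1,1 A cohomology}, and check that no cross-terms contribute in bi-degree $(1,1)$ (your explicit observation that the $(0,1)\otimes(1,0)$ cross-terms assemble into squares is exactly the content of the paper's assertion that the correction term $L$ vanishes and the $\ker\p\bar\p$ piece reduces to the diagonal summands). Your alternative direct argument via equation \eqref{eq: crucial} is also the one the paper explicitly notes is available without K\"unneth.
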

    \begin{proof}
        The complex manifold $(G,J)$ decomposes as a product $G=G_1\times\cdots\times G_s$ of complex manifolds $\left(G_i,J_{|_{G_i}}\right)$ where $J_{|_{G_i}}$ is an irreduble Samelson complex structure.
        Then the result follows from Theorem \ref{th: 1,1 A cohomology} using the K\"unneth formula for Aeppli cohomology \cite[Corollary 1.36]{stelzig23}.
        Indeed, using the same notations as in \cite{stelzig23}, we have the following short exact sequence
        $$ 0 \rightarrow L \rightarrow H_A\left(G_1\times\cdots\times G_s,J\right) \rightarrow \left(\ker\p\bar\p\right)\left(H_A\left(G_1,J_{|_{G_1}}\right)\otimes \cdots \otimes H_A\left(G_s,J_{|_{G_s}}\right)\right) \rightarrow 0 .$$
        In bi-degree $(1,1)$, the bi-complex structure (partially pictured in Figure \ref{fig: doublecmplex G}) is such that $L$ vanishes and hence
        \begin{multline*}
            H^{1,1}_A\left(G_1\times\cdots\times G_s,J\right) \cong \left[H_A^{1,1}\left(G_1,J_{|_{G_1}}\right)\otimes H_A^{0,0}\left(G_2,J_{|_{G_2}}\right)\otimes \cdots \otimes H_A^{0,0}\left(G_s,J_{|_{G_s}}\right)\right] \oplus \cdots \\
            \oplus \left[H_A^{0,0}\left(G_1,J_{|_{G_1}}\right)\otimes \cdots \otimes H_A^{0,0}\left(G_{s-1},J_{|_{G_{s-1}}}\right)\otimes  H_A^{1,1}\left(G_s,J_{|_{G_s}}\right)\right]
        \end{multline*}

    \end{proof}

%    \begin{rmk*}
 %       The result in Corollary \ref{cor: 1,1 A cohomology reducible case} can be also obtained with a direct computation as in Theorem \ref{th: 1,1 A cohomology} without using the K\"unneth formula.
  %  \end{rmk*}
    
    \begin{rmk}\label{rmk: crucial}
        Since the only non-zero representatives in $H^{1,1}_A(G,J)$ comes from {\rm L}-shaped zig-zags, by counting the bi-invariant metrics, it can be proved that $s\leq h^{1,1}_A(G,J)\leq h^{2,1}_{\bar\p}(G,J)$.
        Therefore, the results of Theorem \ref{th: 1,1 A cohomology} and Corollary \ref{cor: 1,1 A cohomology reducible case} would follow by proving that $h^{2,1}_{\bar\p}(G,J)=s$. 
        However, unless some particular cases (for example for rank less or equal than $4$ or some specific complex structures) we are not able to prove it. 
        Somehow, for these manifolds, it is, in general, easier to explicitly compute the Aeppli cohomology than the Dolbeault cohomology.
        More precisely, the structure of the bi-complex computing all the cohomologies remains a black box from which we are only able to recover information about the {\rm L}-shaped zig-zags coming from squares.
        We want to highlight that this is unusual since we have much more tools to compute the de Rham and Dolbeault cohomologies, while one usually recovers information on the Aeppli and Bott--Chern cohomologies from the formers (as for example in \cite{MR3340173,barbaro_pcflow}). 
        %Following this basic principle, in ?? we expressed obstructions to the existence of pluriclosed metrics in terms of the Dolbeault cohomology, even if the $(1,1)$-Aeppli cohomology is clearly the most natural choice.  
        Also in \cite{MR4032184} the authors use the Tanré's model to get information on the $(2,1)$ and $(3,0)$ Dolbeault cohomology groups and from that deduce the (non-)existence of pluriclosed metrics.
        However, the $(1,1)$-Aeppli cohomology is clearly better suited for studying pluriclosed metrics. It is therefore interesting that here we use Tanré model to directly compute the Aepply cohomology and by-pass the understanding of the Dolbeault cohomology.
    \end{rmk}
 
\section{Global stability}\label{sec: stability}
%    In this section, we prove that on compact Bismut flat manifold with finite fundamental group the Bismut flat metrics are globally stable for the pluriclosed flow. 
 %   This follows by combining the recent developments in the understanding of the long-time behavior of the pluriclosed flow, achieved in \cite{Garcia-Fernandez:2021tq}, and the knowledge of the cohomologies of this special class of manifolds, that we obtained in the previous section.\\    

    Bismut flat metrics are attractive (in the sense of the following theorem) for the pluriclosed flow in their {\em torsion classes}, which, for a metric $\omega$, is the class of $\p\omega$ in the $(2,1)$-Dolbeault cohomology.% $[\p\omega]\in H^{2,1}_{\bar\p}$.
    \begin{theo}[Theorem 1.2 of \cite{Garcia-Fernandez:2021tq}]\label{thm: GJS convergence}
        Let $(M,J, \omega_{BF})$ be a compact Bismut flat manifold. Given $\omega_0$ a pluriclosed metric such that $[\p\omega_0] = [\p\omega_{BF} ] \in H^{2,1}_{\bar\p}(M,J)$, the solution of the pluriclosed flow with initial data $\omega_0$ exists on $[0, \infty)$ and converges to a Bismut flat metric $\omega_\infty$.
    \end{theo}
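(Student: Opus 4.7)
The plan is to reformulate the pluriclosed flow as a generalized Ricci flow on a holomorphic Courant algebroid $Q$ canonically attached to the cohomology class $[\p\omega_{BF}]\in H^{2,1}_{\bar\p}(M,J)$, so that both $\omega_0$ and $\omega_{BF}$ determine generalized Hermitian structures on the same $Q$.

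First I would use the hypothesis $[\p\omega_0]=[\p\omega_{BF}]$ to write $\p\omega_0-\p\omega_{BF}=\bar\p\beta_0$ for some $(2,0)$-form $\beta_0$, and view the pair $(\omega_0,\beta_0)$ as a divergence-free Hermitian structure on the transitive holomorphic Courant algebroid $Q$ whose characteristic class is $[\p\omega_{BF}]$. As the paper itself indicates, along the flow the torsion satisfies $\p\omega_t=\p\omega_0+\bar\p\beta_t$ with $\p_t\beta_t=-(Ric^B(\omega_t))^{2,0}$, so the pluriclosed flow lifts to a coupled evolution of the pair $(\omega_t,\beta_t)$ on $Q$ that is gauge-equivalent to generalized Ricci flow; the Bismut flat locus on $Q$ consists exactly of its static points.

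Next I would exploit that $\omega_{BF}$ is a critical point of a Perelman-type functional $\mathcal{F}$ associated to generalized Ricci flow, whose monotonicity along the flow supplies the central analytic input. From monotonicity and the compatibility of the pair $(\omega_t,\beta_t)$ with the fixed Courant algebroid $Q$, I would derive uniform bounds on $\tr_{\omega_t}\omega_{BF}$ and $\tr_{\omega_{BF}}\omega_t$ through a parabolic maximum principle applied to a Chern--Lu type inequality adapted to the Bismut connection, obtaining uniform $C^0$ equivalence $\omega_t\sim\omega_{BF}$. Standard parabolic bootstrapping of the generalized Ricci flow (Evans--Krylov for a second-order scalar quantity and Schauder on the Courant-algebroid level) would then produce uniform $C^{k,\alpha}$ bounds on $(\omega_t,\beta_t)$, yielding long-time existence on $[0,\infty)$.

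For convergence, with uniform smooth bounds any sequence $t_i\to\infty$ yields a smooth subsequential limit $\omega_\infty$ which is pluriclosed and in the torsion class of $\omega_{BF}$; monotonicity of $\mathcal{F}$ forces $(Ric^B(\omega_\infty))^{1,1}=0$ and the compatibility with $Q$ then upgrades this to Bismut flatness. To promote subsequential to full convergence I would invoke a \textup{Lojasiewicz--Simon} inequality for $\mathcal{F}$ at the critical metric $\omega_{BF}$, valid because the space of Bismut flat Hermitian structures on $Q$ is an analytic subvariety whose Hessian has closed range. The main obstacle I expect is the uniform $C^0$ estimate along the flow: the $(2,0)$-component $\beta_t$ couples with $\omega_t$ in a nontrivial way, so the standard pluriclosed flow arguments at the level of forms on $M$ must be replaced by estimates performed within the Courant-algebroid formalism, which is the technical heart of the proof.
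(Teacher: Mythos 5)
First, note that the paper does not prove this statement at all: it is imported verbatim as Theorem 1.2 of \cite{Garcia-Fernandez:2021tq} and used as a black box, so there is no internal proof to compare against. Measured against the actual argument in that reference, your opening move is the right one: the hypothesis $[\partial\omega_0]=[\partial\omega_{BF}]$ is used precisely to write $\partial\omega_0-\partial\omega_{BF}=\bar\partial\beta_0$ and to place both $\omega_0$ and $\omega_{BF}$ as generalized Hermitian metrics on the same holomorphic Courant algebroid, with the flow lifting to the pair $(\omega_t,\beta_t)$.

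The analytic core of your sketch, however, diverges from the cited proof and contains genuine gaps. In \cite{Garcia-Fernandez:2021tq} the decisive input is the flatness of the background Bismut connection: it lets one trivialize the algebroid and recast the flow of the generalized metric as a parabolic equation of harmonic-map-heat-flow type into a nonpositively curved symmetric space of generalized metrics, and the uniform $C^0$ equivalence, higher-order bounds, and convergence all come from that convexity structure (maximum principles for distance-type quantities in the target), not from a Perelman-type entropy. Your replacement steps do not close: monotonicity of a generalized $\mathcal{F}$-functional is not known to yield uniform two-sided metric bounds in this setting; a Lojasiewicz--Simon inequality for such functionals on the space of pluriclosed pairs is not available off the shelf and is not what produces full convergence in the reference (which, tellingly, only identifies the limit as \emph{some} Bismut flat metric, not $\omega_{BF}$ itself --- a point the present paper explicitly remarks on and later remedies a posteriori). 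Most seriously, your final step, that a subsequential limit with $\left(Ric^B(\omega_\infty)\right)^{1,1}=0$ is ``upgraded to Bismut flatness by compatibility with $Q$,'' assumes exactly the rigidity (Bismut Hermitian Einstein $\Rightarrow$ Bismut flat) that the present paper obtains as a \emph{consequence} of this theorem; as written, that step is circular and would need an independent argument, which in the reference is supplied by the harmonic-map structure of the limit.
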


%    \begin{rmk*}
 %       A priori, there is no clear relation between the endpoint of the flow $\omega_\infty$ and the background metric $\omega_{BF}$.
  %      However, something more can actually be said about $\omega_\infty$ (see Theorem \ref{th: global stability}).
   % \end{rmk*}

	Thanks to the knowledge of the $(1,1)$-Aeppli cohomology of compact Samelson spaces achieved in the previous section, we can now check that the cohomological condition in Theorem \ref{thm: GJS convergence} holds for all the pluriclosed metrics on these manifolds.
	It follows that the Bismut flat metrics on compact Bismut flat manifolds with finite fundamental group are globally stable for the pluriclosed flow. 
    We use here a refined version of the argument from \cite{barbaro_pcflow} which applies to this much more general case, proving the following result.
    \begin{theo}\label{th: global stability}
        Let $(G,J,\omega_{BF})$ be a compact Samelson space, that is a compact simply-connected semisimple Lie group with a Bismut flat Hermitian structure coming from the Killing form (as described in Section \ref{sec: 2}). 
        Suppose it decomposes in $s$ irreducible components $\left(G_i,J_{|_{G_i}}\right)$ and define $\omega_i:=\left(\omega_{BF}\right)_{|_{G_i}}$.
        Then for any pluriclosed metric $\omega_0$ on $\left(G,J\right)$ there exist positive constants $\gamma_i$'s such that the solution to the pluriclosed flow with initial data $\omega_0$ exists on $[0, \infty)$ and converges to $\sum_{i=1}^s\gamma_i\omega_i$ up to diffeomorphism. 
    \end{theo}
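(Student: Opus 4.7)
The argument should reduce to Theorem \ref{thm: GJS convergence} through the Aeppli cohomology computation of Corollary \ref{cor: 1,1 A cohomology reducible case}. The key observation is that the assignment $[\alpha]_A \mapsto [\partial \alpha]_{\bar\partial}$ defines a natural map $H^{1,1}_A(G,J) \to H^{2,1}_{\bar\partial}(G,J)$ on pluriclosed classes (since $\partial\bar\partial \alpha = 0$ makes $\partial \alpha$ a $\bar\partial$-closed form, and $\partial(\partial u + \bar\partial v)$ is $\bar\partial$-exact). Hence it suffices to produce positive constants $\gamma_i$ with $[\omega_0]_A = \sum_i \gamma_i [\omega_i]_A$ in $H^{1,1}_A(G,J)$: the combination $\omega_{BF} := \sum_i \gamma_i \omega_i$ would then be a bona fide Bismut flat metric by Section \ref{subsec: bi-inv metrics}, sharing the torsion class of $\omega_0$, and Theorem \ref{thm: GJS convergence} would apply.

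Corollary \ref{cor: 1,1 A cohomology reducible case} gives the basis $\{[\omega_i]_A\}$ of $H^{1,1}_A(G,J)$, so the decomposition $[\omega_0]_A = \sum_i \gamma_i [\omega_i]_A$ exists and is unique, with $\gamma_i \in \mathbb{R}$ by reality. To read off each coefficient, I would fix $p \in \prod_{j \neq i} G_j$ and pull back along the holomorphic inclusion $\iota_i: G_i \hookrightarrow G$, $g \mapsto (g,p)$. Because $\iota_i^* \omega_j = \delta_{ij} \omega_i$ and pullback preserves Aeppli-exactness, one obtains $[\iota_i^* \omega_0]_A = \gamma_i [\omega_i]_A$ in $H^{1,1}_A(G_i, J|_{G_i}) \cong \mathbb{C}$, while $\iota_i^* \omega_0$ is a positive pluriclosed form on the irreducible factor $G_i$.

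The crux, and the main obstacle, is a positivity statement on each irreducible factor: on a compact Samelson space with one-dimensional $H^{1,1}_A$, every positive pluriclosed $(1,1)$-form represents a strictly positive multiple of $[\omega_i]_A$. My approach would go through the perfect Aeppli--Bott-Chern pairing, which identifies $H^{n_i-1,n_i-1}_{BC}(G_i) \cong \mathbb{C}$ and yields the formula $\gamma_i = \bigl(\int_{G_i} \iota_i^* \omega_0 \wedge \beta_i\bigr) \big/ \bigl(\int_{G_i} \omega_i \wedge \beta_i\bigr)$ for any non-zero Bott-Chern class $[\beta_i]$. Both integrals then have the same sign provided $[\beta_i]$ admits a \emph{positive} representative. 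Constructing such a representative — with natural candidate $\omega_i^{n_i-1}$, which is in general only $\partial\bar\partial$-closed, symmetrised by averaging over the right action of the maximal torus (which preserves $J$) and corrected by a $\partial$- and $\bar\partial$-exact term to enforce simultaneous closure while preserving positivity — is where a genuine refinement of the rank-two argument of \cite{barbaro_pcflow} is needed, and is the hardest technical point of the proof.

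Once the $\gamma_i$ are shown to be strictly positive, Theorem \ref{thm: GJS convergence} guarantees long-time existence and convergence to a Bismut flat limit $\omega_\infty$. Along the flow $[\partial \omega_t]_{\bar\partial}$ is preserved, as recalled in the introduction, so $\omega_\infty$ has the same torsion class as $\omega_{BF} = \sum_i \gamma_i \omega_i$; combined with the classification of bi-invariant metrics compatible with $J$ from Section \ref{subsec: bi-inv metrics}, this identifies $\omega_\infty$ with $\sum_i \gamma_i \omega_i$ up to diffeomorphism, completing the proof.
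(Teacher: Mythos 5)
Your overall strategy coincides with the paper's: decompose $[\omega_0]_A=\sum_i\lambda_i[\omega_i]_A$ using Corollary \ref{cor: 1,1 A cohomology reducible case}, show the $\lambda_i$ are positive, apply $\p$ to match torsion classes with the Bismut flat metric $\sum_i\lambda_i\omega_i$, and invoke Theorem \ref{thm: GJS convergence}. However, the step you yourself flag as the crux --- strict positivity of the coefficients --- is left genuinely open, and the route you sketch for it is unlikely to close. You want to evaluate the Aeppli class against a \emph{positive} representative of the dual class in $H^{n_i-1,n_i-1}_{BC}(G_i)$; but a strictly positive $d$-closed $(n_i-1,n_i-1)$-form is, by Michelsohn's linear-algebra lemma, the $(n_i-1)$-st power of a balanced metric, and compact non-K\"ahler Samelson spaces are not expected to carry balanced structures (nor is it clear that averaging $\omega_i^{n_i-1}$ over the torus action and then correcting by exact terms can be done while preserving positivity). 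So the pairing argument is not merely ``the hardest technical point'': as proposed, it is a gap.

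The paper's resolution is far more elementary and bypasses duality entirely. First, no pluriclosed metric $\omega$ on a compact Samelson space can be Aeppli-exact: if $\omega=\p\bar\alpha+\bar\p\alpha$, then $\Omega:=\omega+\p\alpha+\bar\p\bar\alpha=d(\alpha+\bar\alpha)$ is closed and satisfies $\Omega(x,Jx)=\omega(x,Jx)=g(x,x)>0$, since the added $(2,0)$ and $(0,2)$ pieces vanish on pairs of the form $(x,Jx)$; hence $\Omega$ would be symplectic, contradicting Borel's theorem that compact semisimple Lie groups admit no symplectic structure. Positivity of the $\lambda_i$ then follows from the cheap observation that if $\lambda_i\le 0$, the form $(\omega_0)|_{G_i}-\lambda_i\omega_i$ is again a pluriclosed metric on $G_i$ whose Aeppli class vanishes --- a contradiction. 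You should replace your pairing argument with this. A smaller gap occurs at the end: Theorem \ref{thm: GJS convergence} only gives that $\omega_\infty$ is Bismut flat, hence bi-invariant for \emph{some} Lie group structure on the underlying manifold, not a priori the original one; the paper invokes Milnor's Lemma \ref{lem: Milnor} to identify that structure with the original up to isomorphism, which is what produces the diffeomorphism $\varphi$ with $\varphi^*\omega_\infty=\sum_i\gamma_i\omega_i$.
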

    \begin{proof}
        First of all, given a pluriclosed metric $\omega$ on $\left(G,J\right)$ we show that its class in the $(1,1)$-Aeppli cohomology can not be zero, see also \cite[Theorem 1.1]{marouani2023}.
        Suppose that $\omega$ is $\left(\p \,\rm{or}\, \bar\p\right)$-exact, namely there exists $\alpha\in\A^{1,0}_JG$ such that $\omega = \p\bar\alpha + \bar\p\alpha$.
        Then 
        $$\Omega:=\omega+\p\alpha+\bar\p\bar\alpha = d(\alpha+\bar\alpha)$$
        is a symplectic form since it is obviously a closed $2$-form, and for any non-zero vector field $x\in\Cinf(G;TG)$ it holds
        $$ \Omega(x,Jx) = \omega(x,Jx)+\p\alpha(x,Jx)+\bar\p\bar\alpha(x,Jx) = g(x,x) \neq 0. $$
        However, a result of Borel \cite[Theorem 1]{Borel} ensures that $G$ does not admit any symplectic structure.\\
		
		Now, following Section \ref{subsec: bi-inv metrics}, the restriction of $\omega_{BF}$ to any $\left(G_i,J_{|_{G_i}}\right)$ is a Bismut flat metric $\omega_i=\left(\omega_{BF}\right)_{|_{G_i}}$, which thanks to the above considerations is a representative of a non-zero class in $H^{1,1}_A\left(G_i,J_{|_{G_i}}\right)$.
        Moreover, Corollary \ref{cor: 1,1 A cohomology reducible case} ensures that $H^{1,1}_A(G,J)$ is generated by the pull-backs of these metrics.
        Thus for any pluriclosed metric $\omega_0$ on $(G,J)$ there exist constants $\lambda_i$ for $i=1,\ldots,s$ such that 
        $$[\omega_0] = \sum_{i=1}^s \lambda_i [\omega_{i}] \quad\text{in } H^{1,1}_A \left(G,J\right).$$ 
        Here the constants $\lambda_i$'s must be positive because the calss of $\left(\omega_0\right)_{|_{G_i}} - \lambda_i\omega_i$ is zero in $H^{1,1}_A\left(G_i,J_{|_{G_i}}\right)$ hence it can not be a pluriclosed metric on $\left(G_i,J_{|_{G_i}}\right)$.
        Consequently, by applying the $\p$ operator one gets that
        $$[\p\omega_0] = \left[ \p \sum_{i=1}^s \lambda_i \omega_{i}\right] \quad\text{in } H^{2,1}_{\bar\p} \left(G,J\right).$$
        Finally, we notice that the metric $\sum_{i=1}^s \lambda_i \omega_{i}$ is a Bismut flat metric on $(G,J)$ since it is bi-invariant and compatible with $J$.
        Therefore, Theorem \ref{thm: GJS convergence} applies to ensure the long-time existence of the pluriclosed flow with initial data $\omega_0$ and convergence to a Bismut flat metric $\omega_\infty \in \sum_{i=1}^s \lambda_i [\omega_{i}]_A$.\\
        
        Thanks to the characterization in Section \ref{sec: 2}, Bismut flat metrics are bi-invariant with respect to some Lie group structure, and all the bi-invariant metrics with respect to the action of $G$ which are compatible with $J$ are of the type $\sum_{i=1}^s \gamma_i \omega_{i}$ for positive constants $\gamma_i$'s.
        We notice that a priori there might be different Lie group structures on $G$ as manifold.
        However, the Milnor result (Lemma \ref{lem: Milnor}) ensures that there is only one up to diffeomorphisms of the underlying manifold.
        Consequently,  $\omega_\infty$ and $\omega_{BF}$ are bi-invariant with respect to two isomorphic Lie group structures on $G$. 
        In particular, there exists $\varphi\in\mbox{Diff}(G)$ such that $\varphi^*\omega_\infty=\sum_{i=1}^s \gamma_i \omega_{i}$ for some positive constants $\gamma_i$'s.
    \end{proof}

    As an immediate corollary of Theorem \ref{th: global stability} we obtain the global stability of the Bismut flat metrics for the pluriclosed flow on any manifold whose universal cover is a compact Samelson space.
    Indeed, we can prove the global existence of the pluriclosed flow and its convergence on the universal cover.
    Thanks to the classification in Theorem \ref{thm: Bismut flat manifolds} these manifolds are precisely the compact Bismut flat manifolds with finite fundamental group.

    \begin{theo}\label{th: global stability bis}
        Let $(M,J,\omega_{BF})$ be a compact Bismut flat manifold with finite fundamental group.
        Then for any pluriclosed metric $\omega_0$ on $\left(M,J\right)$ the solution to the pluriclosed flow with initial data $\omega_0$ exists on $[0, \infty)$ and converges to a Bismut flat metric $\omega_{\infty}$.
    \end{theo}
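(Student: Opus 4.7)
The plan is to reduce the statement to the simply-connected case already handled by Theorem \ref{th: global stability}, by lifting the pluriclosed flow to the universal cover and exploiting equivariance under deck transformations.

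First, I would identify the universal cover $\widetilde M$ of $M$. Since $\pi_1(M)$ is finite, $\widetilde M$ is itself compact. By Theorem \ref{thm: Bismut flat manifolds}, a finite cover of $M$ is a local Samelson space diffeomorphic to $G\times(\S^1)^k$, so its universal cover (which is also $\widetilde M$) is diffeomorphic to $G\times\R^k$; compactness forces $k=0$, so $\widetilde M=G$ is a compact Samelson space in the sense of Definition \ref{def: samelson space}. The pulled-back Hermitian structure $(J,\pi^*\omega_{BF})$ makes $\widetilde M$ a compact Samelson space to which Theorem \ref{th: global stability} applies.

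Next, given any pluriclosed metric $\omega_0$ on $(M,J)$, I would consider its lift $\widetilde\omega_0:=\pi^*\omega_0$, which is a pluriclosed metric on $(\widetilde M,J)$ invariant under the deck transformation group $\Gamma:=\pi_1(M)$, which acts on $\widetilde M$ by biholomorphic isometries of $\pi^*\omega_{BF}$. By Theorem \ref{th: global stability}, the pluriclosed flow $\widetilde\omega_t$ starting at $\widetilde\omega_0$ exists on $[0,\infty)$ and converges to a Bismut flat metric $\widetilde\omega_\infty$ on $\widetilde M$.

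The key observation is that the pluriclosed flow is a geometric evolution depending only on the Hermitian structure, hence diffeomorphism-equivariant: if $\varphi\in\mbox{Diff}(\widetilde M,J)$ preserves $J$, then $\varphi^*\widetilde\omega_t$ is the solution with initial data $\varphi^*\widetilde\omega_0$. Since each $\gamma\in\Gamma$ acts biholomorphically and $\gamma^*\widetilde\omega_0=\widetilde\omega_0$, uniqueness of the flow gives $\gamma^*\widetilde\omega_t=\widetilde\omega_t$ for all $t\geq 0$. Thus $\widetilde\omega_t$ descends to a pluriclosed flow $\omega_t$ on $M$ with initial data $\omega_0$, defined on $[0,\infty)$. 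Passing to the limit, $\widetilde\omega_\infty$ is also $\Gamma$-invariant and descends to a Bismut flat metric $\omega_\infty$ on $M$, to which $\omega_t$ converges (the convergence on $\widetilde M$ is smooth, and since $\Gamma$ is finite the quotient topology gives convergence on $M$).

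The only step that requires mild care is the equivariance argument and the descent of the limit; neither is a serious obstacle, since $\Gamma$ is a finite group of biholomorphisms and the uniqueness of the pluriclosed flow (a strictly parabolic equation after the usual gauge fixing in the $(1,1)$-sector) makes the whole construction entirely routine once the classification of $\widetilde M$ as a Samelson space is in place.
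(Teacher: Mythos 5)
Your proposal is correct and follows essentially the same route as the paper: the paper likewise deduces Theorem \ref{th: global stability bis} as an immediate corollary of Theorem \ref{th: global stability} by noting that finiteness of $\pi_1(M)$ together with Theorem \ref{thm: Bismut flat manifolds} forces the universal cover to be a compact Samelson space, and then running the flow upstairs. Your explicit treatment of deck-transformation equivariance and descent of the limit only spells out what the paper leaves implicit.
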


	\medskip

    The case of compact Bismut flat manifolds with infinite fundamental group (which by Theorem \ref{thm: Bismut flat manifolds} corresponds to manifolds diffeomorphic to $G\times\left(\S^1\right)^k$ for a semisimple Lie group $G$ and positive $k$) remains open.
    %{We remark that also in [Fino - Gran] the authors stick on the semisimple case.}
    In fact, the main argument we used in proving global stability was to show that the subspace generated by the classes of the Bismut flat metrics fills up the whole $H^{1,1}_A(G,J)$.
    However, for {\em local} Samelson space the subspace of $H^{1,1}_A(G,J)$ generated by the classes of the pluriclosed metrics grows in such a way that it makes more difficult to verify that it is generated by Bismut flat metrics.
    %In the above case, we are able to do it since we are actually working on a compact Samelson space, hence we have good control of the Bismut flat metrics which are just bi-invariant metrics with respect to some Lie group structure in the given isomorphism class.
    %On the other hand, in general, we should work on a {\em local} Samelson space where we have a worse understanding of the family of Bismut flat metrics.% \textcolor{red}{because we need to take into account the quotient of $G\times\R^k$ by isometries of $G$}.
    We present here an example in order to clarify this difficulty.
    
    \begin{ex}\label{ex: higher rank}
        Consider the simple Lie group of rank two $\SU(3)$ equipped with the Bismut flat Hermitian structure $\left(J_{\SU(3)},\omega_{BF}\right)$ coming from the Killing form, and the complex torus with the standard complex structure
        $\left(\S^1\times\S^1,J_{St}\right)$.
        Define $J:=J_{\SU(3)}\times J_{St}$ the product complex structure on $M:=\SU(3)\times\S^1\times\S^1$.
        The lower bi-degrees of the bi-complex associated to $\left(\SU(3),J_{\SU(3)}\right)$ are as in Figure \ref{fig: doublecmplex G}.
        Therefore, by applying the K\"unneth formula for Aeppli cohomology \cite[Corollary 1.36]{stelzig23} one gets that the $(1,1)$-Aeppli cohomology of $(M,J)$ is of dimension $4$ generated by the classes of $\omega_{BF}, \psi\wedge\bar\psi, \nu_1\wedge\bar\psi, \psi\wedge\bar\nu_1$, for $\psi$ the $(1,0)$-form generating the cohomology of the torus.
        Thus, given a generic pluriclosed metric $\omega$ on $(M,J)$, its class in the $(1,1)$ Aeppli cohomology group $H^{1,1}_A(M,J)$ is
        \begin{equation}\label{eq: above}
            [\omega] =  \alpha[\omega_{BF}] + \beta \frac{\im}{2}[\psi\wedge\bar\psi] + u [\nu_1\wedge\bar\psi] - \bar u [\psi\wedge\bar\nu_1],
        \end{equation}
        for coefficients $\alpha,\beta\in\R$, and $u\in\C$ such that
        \begin{equation}\label{eq: bove}
        \begin{cases}
            \alpha>0, \quad \beta>0,\\
            \alpha\beta>4|u|^2.
        \end{cases}\end{equation}
        The above equation \eqref{eq: above} defines the subset of $H^{1,1}_A(M,J)$ generated by the classes of the pluriclosed metrics on $(M,J)$.
        Henceforth, to apply the argument in Theorem \ref{th: global stability} one should verify that it is generated by the classes of the Bismut flat metrics, however, we can not argue as before counting the bi-invariant metrics coming from the Killing forms of the simple components.
        We thus define a family of pluriclosed metrics as
        \begin{align*}
            \omega_{\alpha,\beta,u} := \alpha\omega_{BF} + \beta \frac{\im}{2}\psi\wedge\bar\psi + u \nu_1\wedge\bar\psi - \bar u \psi\wedge\bar\nu_1, %+ v\, \bar\p\varphi^4 + \bar v\, \p\bar\varphi^4 \\
            %=& \frac{\im}{2}\left\{ \left(\alpha+2\Re(v)\right)\varphi^1\wedge\varphi^{\bar 1} + \left(\alpha -\sqrt{3}\Im(v)+\Re(v)\right)\varphi^2\wedge\varphi^{\bar2} \right. \\
            %& \left. + \left(\alpha + \sqrt{3}\Im(v) + \Re(v)\right)\varphi^3\wedge\varphi^{\bar3} + \alpha\varphi^{4\bar4} + \beta \psi\wedge\bar\psi \right\} + u \varphi^4\wedge\bar\psi - \bar u \psi\wedge\bar\varphi^4
        \end{align*}
        with coefficients $\alpha,\beta$ and $u$ as in \eqref{eq: bove}, and by straightforward computations, we verify that all these metrics are Bismut flat.

        Note that the metrics $\omega_{\alpha,\beta,u}$ give Bismut flat metrics on the cover $\SU(3)\times\R^2$ which are not bi-invariant with respect to the product Lie group structure whenever $u\neq0$.
        Indeed, being bi-invariant means that for any $x,y,z$ in the Lie algebra
        $$\omega_{\alpha,\beta,u} ([x,y],Jz) + \omega_{\alpha,\beta,u} ([x,z],Jy) = 0.$$
        However, taking $z=\psi^*$ the dual of $\psi$ we get
        \begin{equation*}
            \omega_{\alpha,\beta,u} ([\cdot,\cdot\cdot],J\psi^*) + \omega_{\alpha,\beta,u} ([\cdot,\psi^*],J\cdot\cdot) = 
            \im \bar u \,\bar\nu_1([\cdot,\cdot\cdot]) = - \im \bar u \,d\,\bar\nu_1 \neq 0.
        \end{equation*}
        A posteriori, thanks to the characterization of Bismut flat metrics of Theorem \ref{thm: Bismut flat manifolds}, we may assert that they are bi-invariant with respect to another Lie group structure on $\SU(3)\times\R^2$ (as manifold) isomorphic to the previous one.
        More precisely, by Lemma \ref{lem: Milnor}, for any $u\in\C$ there exists an isometry $$\phi_u : \left(\SU(3)\times\R^2,\omega_{\alpha,\beta,u}\right) \xrightarrow{\sim} \left(\SU(3)\times\R^2,\omega_{\alpha',\beta',0}\right).  $$
        %In fact, $\omega_{BF}$ and $\psi\wedge\bar\psi$ are both bi-invariant forms since they come from the Killing forms on $\SU(3)$ and $\R^2$ respectively;
        %while, $\nu_1$ is left-invariant but not right-invariant on $\SU(3)$.
        %Therefore, on $\SU(3)\times\S^1\times\S^1$ there exist Lie groups structure different from the product one such that the metrics $\omega_{\alpha,\beta,u}$ with $u\neq 0$ are bi-invariant with respect to them.
        %This corresponds with taking a non-trivial action $\rho:\Z^2\rightarrow I(\SU(3))$ when performing the quotient of $\SU(3)\times\R^2$, see Definition \ref{def: loc sam space}.
    \end{ex}

\section{Homogeneous Bismut Hermitian Einstein metrics}\label{sec: non-flat BHE}

    The existence of non-flat Bismut Hermitian Einstein metrics on Bismut flat manifolds with finite fundamental group is obstructed by the global stability property for the pluriclosed flow proved in Theorem \ref{th: global stability bis}.
    Indeed, we have the following simple corollary of Theorem \ref{th: global stability bis}.
    \begin{cor}\label{cor: obstruction on Bf}
        Let $(M,J,\omega_{BF})$ be a compact Bismut flat manifold with finite fundamental group.
        Then any Bismut Hermitian Einstein metric is Bismut flat.
    \end{cor}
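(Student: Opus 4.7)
The plan is to derive Corollary \ref{cor: obstruction on Bf} as an immediate consequence of Theorem \ref{th: global stability bis} by observing that a Bismut Hermitian Einstein metric is, by definition, a static point of the pluriclosed flow, and then exploiting uniqueness.

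Concretely, let $\omega$ be a Bismut Hermitian Einstein metric on $(M,J)$. By definition $\omega$ is pluriclosed and satisfies $(Ric^B(\omega))^{1,1}=0$, so the constant one-parameter family $\omega(t)\equiv\omega$ is a smooth solution of the pluriclosed flow with initial datum $\omega$. Applying Theorem \ref{th: global stability bis} to $\omega$ viewed as a pluriclosed metric on $(M,J,\omega_{BF})$ yields a solution of the pluriclosed flow on $[0,\infty)$ converging to a Bismut flat metric $\omega_\infty$. By uniqueness of smooth solutions to the pluriclosed flow (a quasilinear strictly parabolic equation in the sense of Streets--Tian), the two solutions must agree, so $\omega=\omega_\infty$ is Bismut flat.

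The only point that deserves a brief remark is the Kähler possibility left open by Ye's theorem (which allows $(Ric^B(\omega))^{1,1}=\lambda\omega$ either with $\omega$ Kähler or with $\lambda=0$). Since $\pi_1(M)$ is finite, the Euclidean factor in the classification of Theorem \ref{thm: Bismut flat manifolds} must vanish, so a finite cover of $M$ is a compact semisimple Lie group $G$; by the Borel theorem already invoked in the proof of Theorem \ref{th: global stability}, $G$ carries no symplectic structure, hence no Kähler structure, and neither does $(M,J)$. Thus $\lambda=0$ is automatic and the reduction above applies in all cases.

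The proof is essentially a tautological application of Theorem \ref{th: global stability bis}, so there is no substantive obstacle; all the hard work has already been done in Sections \ref{sec: cohomology comp} and \ref{sec: stability}.
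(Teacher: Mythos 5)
Your argument is exactly the one the paper intends (the paper leaves it implicit as a ``simple corollary''): a Bismut Hermitian Einstein metric is by definition a static point of the pluriclosed flow, so by uniqueness of solutions it must coincide with the limit $\omega_\infty$ produced by Theorem~\ref{th: global stability bis}, which is Bismut flat. The digression on Ye's theorem is harmless but not needed, since the paper already takes $\left(Ric^B(\omega)\right)^{1,1}=0$ as the definition of Bismut Hermitian Einstein.
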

	\begin{proof}
		Suppose $\omega_{BHE}$ is a Bismut Hermitian Einstein metric on $(M,J)$.
		By Theorem \ref{th: global stability bis} it must converge through the pluriclosed flow to a Bismut flat metric. However, $\omega_{BHE}$ is a static point of the flow, hence it must be Bismut flat itself.
	\end{proof}

    Now let $(M,J)$ be a C-space, which we recall is a compact complex manifold admitting a transitive action by a compact Lie group of biholomorphisms and finite fundamental group.
    Thanks to \cite[Theorem C]{MR66011}, such a space admits a transitive action of a compact semisimple Lie group, hence it can be written as $M=G/H$ where $G$ is a compact semisimple Lie group and $H$ a closed connected subgroup whose semisimple part coincides with the semisimple part of the centraliser of a toral subgroup of $G$. 
    Consequently, $G/H$ fibers over a generalized flag manifold $G/T$ with toric fibers:
    $$T/H \hookrightarrow G/H \rightarrow G/T.$$
    The existence of a pluriclosed metric and the vanishing of the Chern class together give further restriction on the geometry of these manifolds which we describe in the following result.
     \begin{prop}\label{prop: c-space}
    	Let $(M,J)$ be a C-space, and suppose that it can be equipped with a Bismut Hermitian Einstein metric.
    	Then, up to finite cover, it is a compact semisimple Lie group equipped with a Samelson complex structure. 
    \end{prop}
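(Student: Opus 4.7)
The plan is to combine the pluriclosed C-space structure theorem of Fino--Grantcharov--Vezzoni \cite[Theorem 6.1]{MR4032184} with a first Chern class computation in Aeppli cohomology. Since a Bismut Hermitian Einstein metric is by definition pluriclosed, that theorem gives, up to a finite cover, a decomposition of $(M,J)$ as a product of complex manifolds $K\times F$, where $K$ is a compact Lie group with a left-invariant complex structure and $F$ is a generalized flag manifold. By Pittie's classification \cite{MR994129} the complex structure on $K$ is of Samelson type, so it suffices to prove that $F$ must be a point.

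The key analytic input is that the Bismut Hermitian Einstein condition forces $c_1(M,J)=0$ in $H^{1,1}_A(M,J)$. Indeed, from \eqref{eq: ricci bismut e chern}, the vanishing of $\left(Ric^B\right)^{1,1}$ yields
\[
Ric^{Ch}(\omega) \;=\; \im\,\bar\p\theta^{1,0}-\im\,\p\theta^{0,1}\;\in\;\imm(\p)+\imm(\bar\p),
\]
so the Chern--Ricci form, which represents a nonzero scalar multiple of $c_1$, is Aeppli-exact. I would then exploit the product structure: the splitting $TM=\pi_K^*TK\oplus\pi_F^*TF$ gives $c_1(M)=\pi_K^*c_1(K)+\pi_F^*c_1(F)$, and Theorem 2 of \cite{MR2764884} ensures $c_1(K)=0$ in de Rham (hence Aeppli) cohomology, so $\pi_F^*c_1(F)=0$ in $H^{1,1}_A(M,J)$. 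The Künneth formula for Aeppli cohomology \cite[Corollary 1.36]{stelzig23} then forces $c_1(F)=0$ in $H^{1,1}_A(F)$. Since $F$ is K\"ahler its Aeppli and Dolbeault $(1,1)$-groups agree, and because $F$ is Fano \cite[Section 5]{MR303478} its first Chern class is represented by a positive $(1,1)$-form, so it cannot vanish unless $F$ reduces to a point. This completes the reduction to a compact Lie group equipped with a Samelson complex structure.

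The main obstacle I expect is the very first step: verifying that the decomposition produced by \cite[Theorem 6.1]{MR4032184} is genuinely a product of complex manifolds with the Lie group factor carrying a left-invariant complex structure (so that Pittie's theorem applies and identifies it as Samelson). A careful reading of that proof, which builds the splitting out of the centralizer of a maximal torus inside the transitive compact group action on the C-space, should confirm this. A secondary subtlety is the Künneth-injectivity needed to descend $\pi_F^*c_1(F)=0$ on the product to $c_1(F)=0$ on $F$; this is immediate from \cite[Corollary 1.36]{stelzig23} but should be recorded explicitly in the write-up.
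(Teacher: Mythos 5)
Your argument is essentially the paper's own: the paper likewise invokes \cite[Theorem 6.1]{MR4032184} to reduce to a product of a Lie group and a generalized flag manifold, then uses \eqref{eq: ricci bismut e chern} to see that a Bismut Hermitian Einstein metric forces $c_1$ to vanish in $(1,1)$-Aeppli cohomology, which is incompatible with the Fano flag factor while being automatic for the Samelson Lie group factor. Your write-up is in fact somewhat more explicit than the paper's (which presents the argument informally and "sums it up" in the proposition without a formal proof), particularly on the K\"unneth descent of $\pi_F^*c_1(F)=0$ to $c_1(F)=0$; the concerns you flag are real but affect the paper's version equally.
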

	\begin{proof}
		First of all, Theorem 6.1 in \cite{MR4032184} ensures that every pluriclosed C-space is (up to a finite cover) the product of a compact Lie group and a generalized flag manifold.
		Then, the existence of a CYT metric on $(M,J)$ would force the first Chern class to vanish in the $(1,1)$-Aeppli cohomology.
		In fact, for a Hermitian manifold $(M,J,g)$ it holds \cite[(2.7)]{MR1836272}
		\begin{equation}\label{eq: ricci bismut e chern}
			\left(Ric^B(\omega)\right)^{1,1} = Ric^{Ch}(\omega) + \im\left(\p\theta^{0,1} - \bar\p\theta^{1,0} \right),
		\end{equation}
		where $\theta^{1,0}$ and $\theta^{0,1}$ are respectively the $(1,0)$ and $(0,1)$ components of the Lee form. 
		However, generalized flag manifolds are K\"ahler--Einstein Fano \cite[Section 5]{MR303478}, while it is known that the first Chern class of Lie groups equipped with Samelson complex structures vanishes in de Rham (hence also Aeppli) cohomology, see e.\,g. \cite[Theorem 2]{MR2764884}.
		Therefore, there can not be any flag component in the cover.
	\end{proof}

%    It is then possible to use Chern--Weil theory and the Tanré model to understand the Dolbeault cohomology of these manifolds, as in Section \ref{sec: cohomology comp}.
 %   The authors of \cite{MR4032184} did it in order to check a cohomological obstruction due to Cavalcanti on the existence of pluriclosed metrics \cite[Theorem 5.15]{MR4157573}, and proved that pluriclosed structures on a C-space can only appear if it is a product of a compact Lie group and a generalized flag manifold (up to finite cover) \cite[Theorem 6.1]{MR4032184}.
  %  More precisely, we have that
%    \begin{theo*}[Theorem 5.15 in \cite{MR4157573}]
 %       Let  $(M,J)$ be a compact complex manifold which admits a pluriclosed metric and is such that
  %      $$ H^{2,1}_{\bar\p}(M,J) = H^{3,0}_{\bar\p}(M,J) = 0. $$
   %     Then $(M,J)$ admits a symplectic structure.
    %\end{theo*}
    %Combining this result with the knowledge of Dolbeault cohomology and the fact that there are no symplectic structures on non-K\"ahler C-spaces \cite[Theorem 1]{Borel}, Fino, Grantcharov, and Vezzoni deduced the following characterization.
%    \begin{theo*}[Theorem 6.1 in \cite{MR4032184}]
 %       Every pluriclosed C-space is (up to a finite cover) the product of a compact
  %      Lie group and a generalized flag manifold.
   % \end{theo*}

    We remark that not any Samelson complex structure is compatible with the existence of a Bismut flat metric; indeed, only the ones which are Hermitian with respect to a bi-invariant metric are so. 
    However, the existence of a left-invariant pluriclosed metric on a compact semisimple Lie group endowed with a Samelson complex structure forces the manifold to admit a Bismut flat metric \cite[Theorem 3.2]{Fino22}.    
    
%    More precisely, we have the following result due to Fino and Grantcharov.
 %   \begin{theo*}[]
  %      Let $G$ be a compact semisimple Lie group of even rank endowed with a Samelson
   %     complex structure $J$ and let $g$ be a left-invariant pluriclosed $J$-Hermitian metric. 
    %    Then the complex structure $J$ has to be compatible with a bi-invariant metric.
    %\end{theo*}
    To summarize, a C-space $(M,J)$ can be equipped with a left-invariant Bismut Hermitian Einstein metric (if and) only if it is a Bismut flat manifold.
    The `if' implication trivially follows by the fact that Bismut flat metric are bi-invariant and Bismut Hermitian Einstein.
    We thus reduced to the setting where we can apply Corollary \ref{cor: obstruction on Bf} to get the following result.
    \begin{theo}\label{thm: non-existence on C-spaces}
        Let $(M, J)$ be a C-space and suppose that $g$ is a left-invariant Bismut Hermitian Einstein metric on it. Then, up to finite cover, $M$ is a Lie group and $g$ is a bi-invariant metric. In particular, $g$ is Bismut flat.
    \end{theo}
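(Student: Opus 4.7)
The plan is to reduce the problem, in three successive steps, to a setting where Corollary \ref{cor: obstruction on Bf} (global stability of the Bismut flat metrics on compact Bismut flat manifolds with finite fundamental group) directly applies, and then to deduce bi-invariance from the structural results recalled in Section \ref{sec: 2}. Since a Bismut Hermitian Einstein metric is in particular pluriclosed, the first step is to invoke Theorem 6.1 of \cite{MR4032184}, itself obtained by combining Cavalcanti's obstruction (Theorem 5.15 in \cite{MR4157573}) with the Tanré-model computation of the $(2,1)$- and $(3,0)$-Dolbeault cohomology on $G/H$: up to a finite cover, $(M,J) = G_0 \times F$ where $G_0$ is a compact semisimple Lie group equipped with a Samelson complex structure and $F$ is a generalized flag manifold.

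The second step is to kill the flag factor using the Bismut Einstein condition. By \eqref{eq: ricci bismut e chern}, the vanishing of $\left(Ric^B(g)\right)^{1,1}$ forces the Chern--Ricci form to be exact modulo $\p\bar\p$, so $c_1(M,J)=0$ in $H^{1,1}_A(M,J)$. Now on the Samelson factor $G_0$ the first Chern class vanishes even in de Rham cohomology (by \cite[Theorem 2]{MR2764884}), whereas the flag factor $F$ is K\"ahler--Einstein Fano (\cite[Section 5]{MR303478}) and its K\"ahler--Einstein form represents a nonzero class in $H^{1,1}_A(F)$. Since the Aeppli cohomology satisfies a K\"unneth formula in the relevant bi-degree (as used already in Corollary \ref{cor: 1,1 A cohomology reducible case} via \cite[Corollary 1.36]{stelzig23}), the pull-back of $c_1(F)$ to the product must vanish; this is only possible if $F$ is a point. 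Hence, up to finite cover, $M = G_0$ with a Samelson complex structure $J$.

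The third step is to activate the Bismut flat structure. Since $g$ is a left-invariant pluriclosed metric on the compact semisimple Lie group $G_0$ with a Samelson complex structure, Theorem 3.2 of \cite{Fino22} applies and shows that $J$ is compatible with some bi-invariant metric; in particular $G_0$ carries a Bismut flat Hermitian structure and has finite fundamental group. Corollary \ref{cor: obstruction on Bf} then forces the Bismut Hermitian Einstein metric $g$ to be itself Bismut flat. Finally, by Theorem \ref{thm: Bismut flat manifolds} and Lemma \ref{lem: Milnor}, the metric $g$ is bi-invariant with respect to a Lie group structure on $M$ which, as an abstract group, is isomorphic to the original one; the characterization in Section \ref{subsec: bi-inv metrics} then identifies $g$ with an element of the explicit family $-\sum_i \lambda_i B_i$ up to diffeomorphism.

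The main obstacle I anticipate is the second step: passing from the analytic Bismut Einstein condition to the ruling out of a flag factor requires careful handling of the first Chern class in $(1,1)$-Aeppli cohomology of a product, together with the correct identification of the Chern--Weil representative on the Fano factor. Once this cohomological bookkeeping is in place, the remaining arguments are essentially citations of Theorem 6.1 of \cite{MR4032184}, Theorem 3.2 of \cite{Fino22}, and Corollary \ref{cor: obstruction on Bf}.
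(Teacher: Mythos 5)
Your proposal is correct and follows essentially the same route as the paper: reduce via Theorem 6.1 of \cite{MR4032184} to a product of a Samelson Lie group and a flag manifold, rule out the flag factor by the vanishing of $c_1$ in $(1,1)$-Aeppli cohomology forced by \eqref{eq: ricci bismut e chern} against the Fano property of the flag factor, then apply Theorem 3.2 of \cite{Fino22} to get Bismut flatness of the manifold and Corollary \ref{cor: obstruction on Bf} to conclude that $g$ itself is Bismut flat, hence bi-invariant. The only difference is that you spell out the K\"unneth bookkeeping for the Aeppli class of $c_1(F)$ on the product, which the paper leaves implicit.
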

	\begin{proof}
		Thanks to Proposition \ref{prop: c-space}, $M$ is finitely covered by a compact semisimple Lie group and $J$ lifts to a Samelson complex structure.
		Then, by \cite[Theorem 3.2]{Fino22} $J$ must be Hermitian with respect to a bi-invariant metric.
		Consequently, $(M,J)$ admits a Bismut flat metric, and we can apply Corollary \ref{cor: obstruction on Bf} to conclude.
	\end{proof}

\printbibliography

\end{document}